\newtheorem{thm}{Theorem}
\newtheorem{prop}[thm]{Proposition}
\newtheorem{lem}[thm]{Lemma}
\newtheorem{cor}[thm]{Corollary}
\newtheorem{rem}[thm]{Remark}
\newtheorem{conj}[thm]{Conjecture}
\newtheorem{fact}[thm]{Fact}
 \font\xviiroman=cmr17
\def\udot{\mathbin{\ooalign{$\cup$\crcr
   \hfil\raise 8pt\hbox{\xviiroman.}\hfil\crcr}}}
\def\bigudotx#1#2{\mathop{\smash{\ooalign{$#1\bigcup$\crcr
   \hfil\raise 8pt\hbox{#2}\hfil\crcr}}\vphantom{\bigcup}}}
\newcommand{\F}{\mathcal{F}}
\newcommand{\K}{\mathcal{K}}
\newcommand{\dist}{{\rm dist}}
\newcommand{\forb}{{\rm Forb}}
\newcommand{\ovchi}{\overline{\chi}}
\newcommand{\chib}{\chi_B}
\newcommand{\vk}{V(K)}
\newcommand{\vw}{{\rm VW}}
\newcommand{\vb}{{\rm VB}}
\newcommand{\ew}{{\rm EW}}
\newcommand{\eb}{{\rm EB}}
\newcommand{\vws}{\left|{\rm VW}\right|}
\newcommand{\vbs}{\left|{\rm VB}\right|}
\newcommand{\ews}{\left|{\rm EW}\right|}
\newcommand{\ebs}{\left|{\rm EB}\right|}
\newcommand{\vwk}{{\rm VW}(K)}
\newcommand{\vbk}{{\rm VB}(K)}
\newcommand{\ewk}{{\rm EW}(K)}
\newcommand{\ebk}{{\rm EB}(K)}
\newcommand{\egk}{{\rm EG}(K)}
\newcommand{\vwks}{\left|{\rm VW}(K)\right|}
\newcommand{\vbks}{\left|{\rm VB}(K)\right|}
\newcommand{\dw}{{\rm d}_{\rm W}}
\newcommand{\db}{{\rm d}_{\rm B}}
\newcommand{\dg}{{\rm d}_{\rm G}}
\newcommand{\mk}{{\bf M}_K}
\newenvironment{proofcite}[1]{\noindent{\bf Proof of #1.\,}}{\hfill$\Box$}
\newcommand{\one}{{\bf 1}}
\newcommand{\zero}{{\bf 0}}
\newcommand{\x}{{\bf x}}
\newcommand{\arrows}{\mapsto}
\newcommand{\hh}{\mathcal{H}}
\newcommand{\E}{\mathbb{E}}
\def\textdef{\textit}
\def\ed{{\textit{ed}}}
\title{The edit distance function and symmetrization}
\author{Ryan Martin}
\address{Department of Mathematics, Iowa State University, Ames, Iowa 50011}
\email{rymartin@iastate.edu}
\thanks{This author's research partially supported by NSF grant DMS-0901008 and by an Iowa State University Faculty Professional Development grant.}
\subjclass[2010]{Primary 05C35; Secondary 05C80}
\keywords{edit distance, hereditary properties, symmetrization, cycles, colored regularity graphs, quadratic programming}
\begin{document}
\begin{abstract}
The edit distance between two graphs on the same labeled vertex set is the size of the symmetric difference of the edge sets.  The distance between a graph, $G$, and a hereditary property, $\mathcal{H}$, is the minimum of the distance between $G$ and each $G'\in\mathcal{H}$.  The edit distance function of $\mathcal{H}$ is a function of $p\in[0,1]$ and is the limit of the maximum normalized distance between a graph of density $p$ and $\mathcal{H}$.

This paper utilizes a method due to Sidorenko [\textit{Combinatorica} \textbf{13}(1), pp. 109-120], called ``symmetrization'', for computing the edit distance function of various hereditary properties.  For any graph $H$, ${\rm Forb}(H)$ denotes the property of not having an induced copy of $H$.  This paper gives some results regarding estimation of the function for an arbitrary hereditary property. This paper also gives the edit distance function for ${\rm Forb}(H)$, where $H$ is a cycle on 9 or fewer vertices.
\end{abstract}
\maketitle

\section{Introduction}

The study of the edit distance in graphs originated independently by Axenovich, K\'ezdy and the author~\cite{AKM}, Alon and Stav~\cite{AS1} and, in a different formulation, by Richer~\cite{R}.  Since then, there has been a great deal of study on the edit distance itself and on the so-called edit distance function.

\subsection{The edit distance function}
The \textdef{edit distance} between graphs $G$ and $G'$ on the same labeled vertex set is $|E(G)\triangle E(G')|$ and is denoted $\dist(G,G')$.  The distance between a graph $G$ and a property $\hh$ is
$$ \dist(G,\hh):=\min\left\{\dist(G,G') : V(G)=V(G'), G'\in\hh\right\} . $$
The \textdef{edit distance function} of a property $\hh$, denoted $\ed_{\hh}(p)$, measures the maximum distance of a density $p$ graph from $\hh$. Formally,
\begin{equation}
   \ed_{\hh}(p) = \lim_{n\rightarrow\infty}\max\left\{\dist(G,\hh) : |V(G)|=n, |E(G)|=\left\lfloor {\textstyle p\binom{n}{2}}\right\rfloor\right\}/{\textstyle\binom{n}{2}} \label{eq:ghhdef}
\end{equation}
if this limit exists.

A \textdef{hereditary property} is a family of graphs that is closed under the taking of induced subgraphs.  It is natural to study the edit distance of graphs from hereditary properties because if $H$ is an induced subgraph of $G$ and $H'$ is an induced subgraph of $G'$, then $\dist(H,H')\leq\dist(G,G')$.

A hereditary property $\hh$ is \textdef{trivial} if there is an $n_0$ such that $\hh$ has no $n_0$-vertex graph (hence, no $n$-vertex graph for $n\geq n_0$).  Otherwise, it is \textdef{nontrivial}.  If $\hh$ is a nontrivial hereditary property, then it has an $n$-vertex graph for all natural numbers $n$. Throughout this paper, all graph properties will be nontrivial hereditary properties.

In \cite{BM}, a result of Alon and Stav~\cite{AS1} is generalized to show that the limit in (\ref{eq:ghhdef}) does indeed exist for nontrivial hereditary properties and, furthermore, that is the limit of the expectation of the edit distance function for random graphs with the appropriate edge-probability:
$$ \ed_{\hh}(p) = \lim_{n\rightarrow\infty}\E[\dist(G(n,p),\hh)]/{\textstyle\binom{n}{2}} . $$ It is explicitly shown in~\cite{BM} that, for any nontrivial hereditary property $\hh$, the function $\ed_{\hh}(p)$ is continuous and concave down.  Hence, it achieves its maximum at a point we define to be $\left(p_{\hh}^*,d_{\hh}^*\right)$.  It should be noted that, for some hereditary properties, $p_{\hh}^*$ might be an interval.

For every hereditary property $\hh$, there is a family of graphs that are minimal with respect to taking induced subgraphs, which we call \textdef{forbidden graphs}. We denote $\F(\hh)$ to be the minimal (with respect to vertex-deletion) set of graphs $H$ for which
$$ \hh=\bigcap_{H\in\F(\hh)}\forb(H) . $$
If $\hh=\bigcap_{H\in\F(\hh)}\forb(H)$, then we denote $\overline{\hh}$ to be the hereditary property that is $\overline{\hh}=\bigcap_{H\in\F(\hh)}\forb(\overline{H})$.  I.e., $H\in\F(\hh)$ if and only if $\overline{H}\in\F(\overline{\hh})$.  Note that $\overline{\hh}$ does not denote the complement of $\hh$ as a set.

For background on the edit distance function, applications thereof and theoretical background, we direct the reader to Balogh and the author~\cite{BM}, Alon and Stav~\cite{AS1,AS2,AS3,AS4}, Axenovich, K\'ezdy and the author~\cite{AKM}, and Axenovich and the author~\cite{AM}.  The theoretical background upon which this is based can be traced to papers by Pr\"omel and Steger~\cite{PS1,PS2,PS3}, Bollob\'as and Thomason~\cite{BT1,BT2} and Alekseev~\cite{A}, among others.

\subsection{Main results}
The main results of this paper are Theorem~\ref{thm:complete} and Theorem~\ref{thm:cycles}, but we also develop a general theory and specific techniques which enable one to compute the edit distance function.

In Theorem~\ref{thm:complete}, we provide bounds on the edit distance function for hereditary properties that forbid a clique.  We later cite the fact that $\ed_{\hh}(p)=\ed{\overline{\hh}}(p)$ (in Theorem~\ref{thm:basic}(\ref{it:comp})) and can be applied to hereditary properties that forbid an independent set.
\begin{thm}\label{thm:complete}
   Let $\hh$ be a nontrivial hereditary property such that $\F(\hh)$ contains a complete graph and let $h$ be the minimum positive integer such that $\hh\subseteq\forb(K_h)$.  Let $\chi$ be the chromatic number of $\hh$ and $m$ be the smallest positive integer such that $\F(\hh)$ contains a complete multipartite graph with $m$ parts.  Clearly, $\chi\leq m\leq h$.
   $$ \min\left\{\frac{p}{\chi-1},\frac{1-p}{\chi-1}+\frac{2p-1}{m-1}\right\}\leq
   \ed_{\hh}(p)\leq\min\left\{\frac{p}{\chi-1},1-p+\frac{2p-1}{m-1}\right\} . $$

   In particular,
   $$ \ed_{\forb(K_h)}(p)=\frac{p}{\chi-1} . $$
\end{thm}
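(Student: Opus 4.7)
The plan is to invoke the colored regularity graph (CRG) machinery in which $\ed_\hh(p) = \min_{K \in \K(\hh)} g_p(K)$ for a quadratic functional $g_p$, and then to (i) exhibit two explicit admissible CRGs for the upper bound and (ii) analyze the quadratic program over all admissible CRGs using Sidorenko symmetrization for the lower bound.

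For the upper bound $p/(\chi-1)$, the witness is the CRG on $\chi - 1$ white vertices with all gray edges; its blowups are exactly the $(\chi-1)$-colorable graphs, all of which lie in $\hh$ by the definition of the chromatic number. At uniform weights $x_i = 1/(\chi-1)$, this yields $g_p = p/(\chi-1)$. Equivalently, one edits $G$ by partitioning $V(G)$ into $\chi - 1$ balanced parts and deleting all within-part edges. For the upper bound $1 - p + (2p-1)/(m-1)$, the witness is the CRG on $m - 1$ white vertices with all white edges; its blowups are balanced complete $(m-1)$-partite graphs. These lie in $\hh$ because every induced subgraph of a complete multipartite graph is itself complete multipartite with no more parts, so no complete-multipartite member of $\F(\hh)$ (which has at least $m$ parts) appears as an induced subgraph, while every non-complete-multipartite member of $\F(\hh)$ trivially cannot be induced in any complete multipartite host. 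A short calculation at uniform weights gives the claimed value.

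For the lower bound, I would fix an arbitrary $K \in \K(\hh)$ and derive structural constraints. The condition that $K_h \in \F(\hh)$ cannot be colored-homomorphically embedded into $K$ restricts the ``white'' side of $K$, effectively capping at $\chi - 1$ the size of any ``white sub-CRG'' with no black edges. The condition that the complete multipartite member of $\F(\hh)$ with $m$ parts cannot be embedded restricts the overall ``multipartite dimension'' of $K$ to at most $m - 1$. Sidorenko's symmetrization then allows one to replace $K$ and its weight vector with a symmetric pair without increasing $g_p$; the resulting low-dimensional quadratic program has two regimes, a ``pure white'' optimum giving $p/(\chi-1)$ and a ``mixed white/black'' optimum giving $(1-p)/(\chi-1) + (2p-1)/(m-1)$, whose minimum matches the claimed lower bound.

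The main obstacle is the lower bound, specifically the symmetrization step and the translation of CRG admissibility into quantitative constraints on the numbers and types of vertices and edges of $K$. The ``In particular'' statement then follows by setting $\chi = m = h$: both terms in the lower bound become $p/(h-1)$, and a direct check shows $p/(h-1) \le 1 - p + (2p-1)/(h-1)$ on $[0,1]$, so the upper bound also equals $p/(h-1)$, yielding the closed form $\ed_{\forb(K_h)}(p) = p/(h-1)$.
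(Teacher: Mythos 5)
Your upper bound is essentially the paper's argument, but your lower bound is a plan rather than a proof, and the plan points in the wrong direction. The decisive structural fact, which you never state, is that no $K\in\K(\hh)$ can contain a black vertex at all: $K_h$ embeds into a single black vertex, so $K_h\in\F(\hh)$ forces every admissible CRG to be all white. Once that is in hand, the paper does not use symmetrization here (that tool is reserved for the cycle results); it simply counts edges with Tur\'an's theorem and evaluates $f_K$, the uniform-weight quadratic form, which also computes $\ed_{\hh}(p)$ as an infimum. Concretely: among every $\chi$ white vertices there must be a non-gray edge (else the $\chi$-chromatic member of $\F(\hh)$ embeds), giving $f_K(p)\geq \min\{p,1-p\}/(\chi-1)$; among every $m$ white vertices there must be a white edge, giving $f_K(p)\geq p/(m-1)$; and the second piece of the stated lower bound, $(1-p)/(\chi-1)+(2p-1)/(m-1)$, is the chord of the concave function $\ed_{\hh}$ through $\bigl(1/2,\tfrac{1}{2(\chi-1)}\bigr)$ and $\bigl(1,\tfrac{1}{m-1}\bigr)$, obtained from Theorem~\ref{thm:basic}(\ref{it:concon}). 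Your proposed ``mixed white/black optimum'' cannot be the source of that term, since admissible CRGs have no black vertices; and ``the $K_h$ condition caps the white sub-CRG at $\chi-1$'' conflates the roles of $K_h$ (which kills black vertices) and of the $\chi$-chromatic forbidden graph (which kills large all-gray white sets). As written, the lower bound does not go through.

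A smaller but genuine error in the upper bound: your second witness, ``$m-1$ white vertices with all white edges,'' is wrong. Into such a CRG only edgeless graphs embed, since an edge $h_1h_2$ of $H$ must map onto a black vertex or a black/gray edge. The correct witness is $m-1$ white vertices with all \emph{black} edges, whose blowups are the complete $(m-1)$-partite graphs; Proposition~\ref{prop:nogray} (or your uniform-weight computation applied to this corrected CRG) then gives $g_{K^{(2)}}(p)=\min\{p,1-p+(2p-1)/(m-1)\}$. Your verification that no member of $\F(\hh)$ embeds, and your derivation of the ``in particular'' clause from $\chi=m=h$, are both correct.
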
~\\

In Theorem~\ref{thm:cycles}, equation (\ref{eq:c3}) is a trivial result, equation (\ref{eq:c4}) was proven by Marchant and Thomason~\cite{MT}.  Some related results for $C_4$ were obtained by Alon and Stav~\cite{AS2}.  Thomason~\cite{T} reports that Marchant~\cite{M} has proven equation (\ref{eq:c5}) and (\ref{eq:c7}).  We note that the problem considered in~\cite{MT} and in~\cite{M} is not edit distance but can be shown to be equivalent.
\begin{thm}\label{thm:cycles}
Let $C_h$ denote the cycle on $h$ vertices.
\begin{eqnarray}
   \ed_{\forb(C_3)}(p) & = & \frac{p}{2} \label{eq:c3} \\
   \ed_{\forb(C_4)}(p) & = & p(1-p) \label{eq:c4} \\
   \ed_{\forb(C_5)}(p) & = & \min\left\{\frac{p}{2},\frac{1-p}{2}\right\} \label{eq:c5} \\
   \ed_{\forb(C_6)}(p) & = & \min\left\{p(1-p),\frac{1-p}{2}\right\} \label{eq:c6} \\
   \ed_{\forb(C_7)}(p) & = & \min\left\{\frac{p}{2},\frac{p(1-p)}{1+p},\frac{1-p}{3}\right\} \label{eq:c7} \\
   \ed_{\forb(C_8)}(p) & = & \min\left\{\frac{p(1-p)}{1+p},\frac{1-p}{3}\right\}  \label{eq:c8} \\
   \ed_{\forb(C_9)}(p) & = & \min\left\{\frac{p}{2},\frac{1-p}{4}\right\} \label{eq:c9} \\
   \ed_{\forb(C_{10})}(p) & = & \min\left\{\frac{p(1-p)}{1+2p},\frac{1-p}{4}\right\},\quad\mbox{for $p\in [1/7,1]$.} \label{eq:c10}
\end{eqnarray}
\end{thm}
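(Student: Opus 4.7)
The plan is to work within the colored regularity graph (CRG) framework, in which
\[\ed_{\forb(C_h)}(p) = \inf_{K \in \K(\forb(C_h))} g_K(p),\]
where $g_K(p) = \min_{\x \in \Delta} \x^T \mk(p)\x$ is a quadratic optimization over probability weightings of $V(K)$. The matrix $\mk(p)$ has diagonal entry $1-p$ on white vertices and $p$ on black vertices, and off-diagonal entries $1-p$, $p$, or $0$ according as the edge is white, black, or gray. A CRG $K$ lies in $\K(\forb(C_h))$ precisely when $V(C_h)$ cannot be partitioned among $V(K)$ so that each fiber over a white vertex is a clique of $C_h$ (an edge or a single vertex, since $C_h$ is triangle-free for $h \geq 4$) and each fiber over a black vertex is an independent set of $C_h$. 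Equations (\ref{eq:c3})--(\ref{eq:c5}) and (\ref{eq:c7}) follow from prior results; the new content lies in (\ref{eq:c6}), (\ref{eq:c8}), (\ref{eq:c9}), and (\ref{eq:c10}).

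For the upper bound in each case I would exhibit explicit small CRGs realizing each term. The term $(1-p)/(r-1)$ is achieved by $r-1$ white vertices with all gray edges, which lies in $\K(\forb(C_h))$ exactly when $r-1 < \lceil h/2 \rceil$, reflecting that $C_h$ cannot be covered by fewer than $\lceil h/2 \rceil$ matching-edges and singletons. The term $p/2$, appearing only for odd $h$, is realized by two black vertices joined by a gray edge; its blowups are bipartite and hence avoid all odd cycles. The terms $p(1-p)/(1+p)$ and $p(1-p)/(1+2p)$ come from CRGs consisting of two or three white vertices plus one black vertex, all edges gray: direct computation yields the value $p(1-p)/((r-1)p + 1-p)$ for the CRG with $r-1$ white and one black vertex, and membership in $\K(\forb(C_h))$ reduces to a short partition check on $V(C_h)$ (for $C_{10}$, the key observation is that four vertices on $C_{10}$ cannot be pairwise at cyclic distance $\geq 3$).

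The lower bound is the substantive part and is where Sidorenko's symmetrization, the paper's central tool, is invoked. The plan is to show that every $K \in \K(\forb(C_h))$ satisfies $g_K(p) \geq$ the listed minimum, by reducing $K$ to canonical form. At an optimal weighting $\x^*$, any two vertices $u, v$ with $(\mk\x^*)_u = (\mk\x^*)_v$ may be merged, or their weights freely redistributed, without altering $g_K$; vertices of zero weight can be discarded. Applying symmetrization and vertex deletion exhaustively collapses an arbitrary $K$ to one of a short list of canonical CRGs, for each of which $g_K(p)$ is computed and compared term by term against the claimed minimum. The principal obstacle is controlling the case analysis as $h$ grows: larger cycles are easier to avoid, so more CRGs lie in $\K(\forb(C_h))$, and each canonical form must be classified and ruled out. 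The restriction $p \in [1/7,1]$ in (\ref{eq:c10}) reflects that a further CRG becomes optimal for small $p$; within the stated range that CRG is dominated by the three listed terms and can be excluded, but extending the result to smaller $p$ would require analyzing it directly.
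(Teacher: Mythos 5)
Your framing (the CRG formulation, $\ed_{\hh}=\min_K g_K$, exhibiting small CRGs for upper bounds, invoking symmetrization for lower bounds) agrees with the paper at the outset, and your upper-bound catalogue of extremal CRGs is essentially the paper's list of extreme points of the clique spectrum, modulo a white/black mix-up: in the paper's convention white vertices receive independent sets and contribute $p$, black vertices receive cliques and contribute $1-p$, so the term $(1-p)/(\lceil h/2\rceil-1)$ comes from black vertices with gray edges and the $p/2$ term from two white vertices, not the reverse. That is fixable. The genuine gap is in your lower-bound plan.

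You claim that at an optimum $\x^*$, vertices $u,v$ with $(\mk\x^*)_u=(\mk\x^*)_v$ ``may be merged, or their weights freely redistributed,'' and that repeating this plus deleting zero-weight vertices collapses any $K\in\K(\forb(C_h))$ to a short finite list. Neither step holds. For a $p$-core the KKT/regularity condition (Theorem~\ref{thm:reg}) gives $\mk(p)\x^*=g_K(p)\one$, so \emph{every} pair of vertices satisfies your merging criterion, yet merging changes $g$ unless the vertices are genuine twins; shifting $\varepsilon$ of weight between $u$ and $v$ changes $\x^T\mk\x$ by $\varepsilon^2([\mk]_{uu}-2[\mk]_{uv}+[\mk]_{vv})$, which is generally nonzero, so redistribution is not free. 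More fundamentally, after discarding zero-weight vertices you are left with $p$-core CRGs, and for each $h$ these form an \emph{infinite} family (e.g., arbitrarily long gray paths on black vertices lie in $\K(\forb(C_h))$ for every $h\geq 5$ and are $p$-cores for small $p$); they cannot be enumerated. The paper does not reduce to a finite list. Instead it combines Theorem~\ref{thm:cores} (to restrict to black-vertex $p$-cores with only white and gray edges when $p<1/2$) and Proposition~\ref{prop:graycycles} (which forbids gray cycles of lengths in $\{\lceil h/2\rceil,\ldots,h\}$ in such a $K$) with the key extremal Lemma~\ref{lem:345cycle}: by choosing a maximum-weight vertex $v_0$, bounding the gray degree via Lemma~\ref{lem:local} and Lemma~\ref{lem:xbound}, and exploiting the absence of short gray cycles to make the gray neighborhoods around $v_0$ disjoint, one derives a quadratic inequality in $\x(v_0)$ whose solution forces $g_K(p)$ above the target function. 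That local ``excluded gray cycle'' argument, which handles infinitely many CRGs at once, is the missing ingredient in your proposal; without it the lower bounds for $h\geq 5$ (and hence equations~(\ref{eq:c5})--(\ref{eq:c10})) do not follow.
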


\begin{cor}\label{cor:cycles}
Let $C_h$ denote the cycle on $h$ vertices.  Then,
$$ \left(p_{\forb(C_h)}^*,d_{\forb(C_h)}^*\right)
   =\left\{\begin{array}{rrl}
           (1, & 1/2), & \mbox{if $h=3$;} \\
           (1/2, & 1/4), & \mbox{if $h=4$;} \\
           (1/2, & 1/4), & \mbox{if $h=5$;} \\
           (1/2, & 1/4), & \mbox{if $h=6$;} \\
           (\sqrt{2}-1, & 3-2\sqrt{2}), & \mbox{if $h=7$;} \\
           (\sqrt{2}-1, & 3-2\sqrt{2}), & \mbox{if $h=8$;} \\
           (1/3, & 1/6), & \mbox{if $h=9$.} \\
           ((\sqrt{3}-1)/2, & (2-\sqrt{3})/2), & \mbox{if $h=10$;} \\
           \end{array}\right. $$
\end{cor}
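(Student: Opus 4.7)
The plan is to derive the corollary directly from Theorem~\ref{thm:cycles}, which in each case already presents $\ed_{\forb(C_h)}(p)$ as the pointwise minimum of at most three elementary functions of $p$. Since every individual summand is either a line, a concave-down parabola, or a rational function of the shape $p(1-p)/(1+kp)$, finding the maximum of the minimum reduces to identifying the active piece on each subinterval and then optimizing there.

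I would dispose of the easy cases first. For $h=3$ the function $p/2$ is increasing, so its maximum on $[0,1]$ is $1/2$ at $p=1$. For $h=4$ the parabola $p(1-p)$ is maximized at $p=1/2$. For $h=5$ and $h=9$ the minimum of two lines is attained at their crossing, giving $(1/2,1/4)$ and $(1/3,1/6)$ respectively. For $h=6$ one checks that $p(1-p)\le(1-p)/2$ exactly on $[0,1/2]$, so the minimum is $p(1-p)$ there and $(1-p)/2$ on $[1/2,1]$; both pieces peak at the junction $p=1/2$ with common value $1/4$.

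The substantive work is in $h=7,8,10$, where rational functions appear. I would treat $f(p)=p(1-p)/(1+p)$ and $g(p)=p(1-p)/(1+2p)$ uniformly: the quotient rule reduces $f'=0$ and $g'=0$ to the quadratics $p^2+2p-1=0$ and $2p^2+2p-1=0$, whose positive roots $p=\sqrt{2}-1$ and $p=(\sqrt{3}-1)/2$ give maximum values $3-2\sqrt{2}$ and $(2-\sqrt{3})/2$, respectively, after direct substitution. The next step is to verify that each of these critical points lies inside the subinterval on which the corresponding rational function is the active minimum. For $h=7$ the comparisons $f(p)\le p/2\Leftrightarrow p\ge 1/3$ and $f(p)\le(1-p)/3\Leftrightarrow p\le 1/2$ isolate the window $[1/3,1/2]$, which contains $\sqrt{2}-1$. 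For $h=8$ the analogous comparison gives the window $[0,1/2]$, and the interior maximum beats the boundary value $(1-1/2)/3=1/6$ since $3-2\sqrt{2}>1/6$. For $h=10$, the comparison $g(p)\le(1-p)/4\Leftrightarrow p\le 1/2$ together with the stated domain $p\in[1/7,1]$ yields the window $[1/7,1/2]$, again containing the critical point.

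The main obstacle is really only bookkeeping: in each multi-piece case one has to certify that the interior critical point delivers the global maximum by comparing it against the maximum of the active minimum on the complementary subinterval, which is attained at the boundary by monotonicity. Beyond that, nothing deeper than single-variable calculus is required.
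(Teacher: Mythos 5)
Your calculus is correct, and since the paper gives no separate proof of this corollary---it is presented as a direct consequence of Theorem~\ref{thm:cycles}---optimizing the explicit pointwise minima is the intended route. For $h=3,\dots,9$ your checks are complete: you identify the active piece on each subinterval, optimize there, and compare against boundary values (or simply note that the crossing of two monotone pieces is where the maximum sits). The quotient-rule reductions to $p^2+2p-1=0$ and $2p^2+2p-1=0$ and the resulting values $3-2\sqrt2$ and $(2-\sqrt3)/2$ are all correct.

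There is, however, a gap in the case $h=10$. Theorem~\ref{thm:cycles} gives the formula for $\ed_{\forb(C_{10})}(p)$ only on $[1/7,1]$, so maximizing that formula over $[1/7,1]$ (which you do correctly, landing at $(\sqrt3-1)/2$) does not by itself certify that this is the global maximum: a priori the true $\ed$-value could be larger at some $p<1/7$ where no formula is supplied. This needs one additional line. The cleanest fix is Theorem~\ref{thm:basic}(\ref{it:concon}): $\ed_{\hh}$ is concave on $[0,1]$, and a concave function with a local interior maximum at $p_0=(\sqrt3-1)/2\in(1/7,1)$ (which your optimization exhibits) attains its global maximum there. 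Alternatively, one can use the universal upper bound $\ed_{\hh}(p)\le\gamma_{\hh}(p)=\min\bigl\{p(1-p)/(1+2p),\,(1-p)/4\bigr\}$ valid on all of $[0,1]$: since $p(1-p)/(1+2p)$ is increasing on $\bigl[0,(\sqrt3-1)/2\bigr]$, every $p<1/7$ satisfies $\ed_{\hh}(p)\le\gamma_{\hh}(p)<(2-\sqrt3)/2=\ed_{\hh}\bigl((\sqrt3-1)/2\bigr)$, and the conclusion follows. Without one of these arguments the $h=10$ entry of the corollary is not justified.
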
~\\

The rest of the paper is organized as follows: Section~\ref{sec:defns} gives some of the general definitions for the edit distance function, such as colored regularity graphs. Section~\ref{sec:estim} gives some theorems with which the edit distance function can be estimated.
Section~\ref{sec:complete} contains the proof of Theorem~\ref{thm:complete}. Section~\ref{sec:pcores} defines and categorizes so-called $p$-core colored regularity graphs introduced by Marchant and Thomason~\cite{MT}. Section~\ref{sec:symmetrization} discusses the symmetrization method in general. Section~\ref{sec:cycles} proves Theorem~\ref{thm:cycles} regarding cycles. Section~\ref{sec:conc} gives some concluding remarks, a conjecture and acknowledgements.


\section{Development of the proofs}
\label{sec:defns}
\subsection{Notation}
All graphs are simple.  If $S$ and $T$ are sets, then $S+T$ denotes the disjoint union of $S$ and $T$.  If $G_1$ and $G_2$ are graphs, then $G_1+G_2$ denotes the disjoint union of the graphs and $G_1\vee G_2$ denotes the join.  If $v$ and $w$ are adjacent vertices in a graph, we denote the edge between them to be $vw$.

\subsection{Colored regularity graphs}

A \textdef{colored regularity graph (CRG)}, $K$, is a simple complete graph, together with a partition of the vertices into black and white $\vk=\vwk+\vbk$ and a partition of the edges into black, white and gray $E(K)=\ewk+\egk+\ebk$.  We say that a graph $H$ embeds in $K$, (writing $H\arrows K$) if there is a function $\varphi: V(H)\rightarrow\vk$ so that if $h_1h_2\in E(H)$, then either $\varphi(h_1)=\varphi(h_2)\in\vbk$ or $\varphi(h_1)\varphi(h_2)\in\ebk\cup\egk$ and if $h_1h_2\not\in E(H)$, then either $\varphi(h_1)=\varphi(h_2)\in\vwk$ or $\varphi(h_1)\varphi(h_2)\in\ewk\cup\egk$.

For a hereditary property of graphs, we denote $\K(\hh)$ to be the subset of CRGs such that no forbidden graph maps into $K$.  That is, $\K(\hh)=\{K : H\not\arrows K, \forall H\in\F(\hh)\}$.

In a CRG, $K$, vertex $v$ is \textdef{twin to} vertex $w$ if their neighborhoods are the same.  That is, they are twin if (a) $v$ and $w$ and $vw$ have the same color and (b) whenever $x\in V(K)-\{v,w\}$, the edges $vx$ and $wx$ are the same color.

We say that a CRG, $K'$ is formed by the \textdef{partition} of a vertex $v$ if $V(K')=V(K)\cup\{v'\}$ where, for every $x\in V(K)$, the edge $v'x$ has the same color in $K'$ as $vx$ has in $K$.  All other edges in $K'$ inherit the same color as in $K$.  We say that $K''$ is formed by the \textdef{fusion} of equivalent vertices $v$ and $v'$ by letting $V(K')=V(K)-(\{v,v'\})\cup\{v''\}$ where, for every $x\in V(K)$, the edge $v''x$ has the same color as both $vx$ and $v'x$.

Two CRGs, $K$ and $K'$ are said to be \textdef{equivalent} if $K'$ can be constructed from $K$ by the partition of vertices or fusion of twin vertices.  A CRG is \textdef{reduced} if it has no pair of equivalent vertices.  A CRG, $K'$ is an equipartition of CRG, $K$ if there is an integer $\ell$ such that each vertex in $K$ is partitioned into exactly $\ell$ vertices.

A CRG $K'$ is said to be \textdef{a sub-CRG of $K$} if $K'$ can be obtained by deleting vertices of $K$.

\subsection{The $f$ and $g$ functions}

For every hereditary property, $\hh$, the function $\ed_{\hh}(p)$ in (\ref{eq:ghhdef}), measures not only the maximum normalized edit distance among density-$p$ graphs but also the expectation of the normalized distance from $G(n,p)$. That is, Alon and Stav~\cite{AS1} prove that
$$ \ed_{\hh}(p)=\lim_{n\rightarrow\infty}\E\left[\dist(G(n,p),\hh)\right]/{\textstyle \binom{n}{2}} . $$

The normalized distance of $G(n,p)$ from a hereditary property is well-defined because the distance from $G(n,p)$ to $\hh$ is concentrated around its mean.

For every CRG, $K$, we associate two functions of $p\in [0,1]$.  The function $f$ is linear in $p$ and $g$ is found by the solution of a quadratic program..  Let $K$ have a total of $k$ vertices $\{v_1,\ldots,v_k\}$, and let $\mk(p)$ be a matrix such that the entries are:
$$ [\mk(p)]_{ij}=\left\{\begin{array}{ll}
                           p, & \mbox{if $v_iv_j\in\vwk\cup\ewk$;} \\
                           1-p, & \mbox{if $v_iv_j\in\vbk\cup\ebk$;} \\
                           0, & \mbox{if $v_iv_j\in\egk$.}
                        \end{array}\right. $$
Then, we can express the $f$ and $g$ functions over the domain $p\in[0,1]$ as follows, with $\vw=\vwk$, $\vb=\vbk$, $\ew=\ewk$ and $\eb=\ebk$:
\begin{eqnarray}
   f_K(p) & = & \frac{1}{k^2}\left[p\left(\vws+2\ews\right)+(1-p)\left(\vbs+2\ebs\right)\right] \label{eq:fdef} \\
   g_K(p) & = & \left\{\begin{array}{rrcl}
   \min & \multicolumn{3}{l}{\x^T\mk(p)\x} \\
   \mbox{s.t.} & \x^T\one & = & 1 \\
   & \x & \geq & \zero \end{array}\right. \label{eq:gdef}
\end{eqnarray}
If we denote $\one$ to be the vector of all ones, then $f_K(p)=\left(\frac{1}{k}\one\right)^T\mk(p)\left(\frac{1}{k}\one\right)$.  So, $f_K(p)\geq g_K(p)$.~\\

\begin{fact}
   The function $g$ is invariant under equivalence classes of CRGs. That is, if $K$ and $K'$ are equivalent CRGs, then $g_K(p)=g_{K'}(p)$ for all $p\in [0,1]$.
\end{fact}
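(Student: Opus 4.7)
The plan is to establish invariance under each of the two generating operations---partition of a vertex and fusion of twin vertices---separately, and then invoke transitivity. Since fusion is literally the inverse of partition, it suffices to handle a single partition: suppose $K'$ arises from $K$ on vertices $\{v_1,\ldots,v_k\}$ by partitioning $v_i$ into twins $v_i, v_{k+1}$.

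The crux is an aggregation map $\pi \colon \{1,\ldots,k+1\} \to \{1,\ldots,k\}$ which is the identity on $\{1,\ldots,k\}$ and sends $k+1$ to $i$. Unpacking the partition definition---and noting that the edge $v_i v_{k+1}$ is forced to take the vertex color of $v_i$, since otherwise $v_i$ and $v_{k+1}$ would not be twin---one checks directly that $[\M_{K'}(p)]_{jl} = [\M_K(p)]_{\pi(j)\pi(l)}$ for all $j,l$. In other words, $\M_{K'}(p)$ is the pullback of $\M_K(p)$ along $\pi$.

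With this in hand, I would prove both inequalities. For $g_{K'}(p) \leq g_K(p)$: given an optimal $\x$ for the $K$ program, arbitrarily split the $i$-th coordinate into two nonnegative parts summing to $x_i$ (for example, $x'_i = x'_{k+1} = x_i/2$) and leave the rest unchanged. The resulting $\x'$ is feasible for the $K'$ program, and expanding using the pullback identity gives $\x'^T \M_{K'}(p) \x' = \x^T \M_K(p) \x$. For the reverse inequality $g_K(p) \leq g_{K'}(p)$: given a feasible $\x'$ for $K'$, aggregate via $y_j = \sum_{\ell \colon \pi(\ell) = j} x'_\ell$. Then $\y \geq \zero$ and $\y^T \one = \x'^T \one = 1$, and the same pullback identity yields $\y^T \M_K(p) \y = \x'^T \M_{K'}(p) \x'$.

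I expect no substantive obstacle: the entire argument is the single observation that partition corresponds to a pullback of the defining matrix under an aggregation map, so feasible points for the two quadratic programs are in value-preserving correspondence via splitting and aggregation. The only care required is to confirm that the edge $v_i v_{k+1}$ is forced to inherit the vertex color of $v_i$, which is what the partition definition is meant to encode when read alongside the twin definition.
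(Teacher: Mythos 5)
Your argument is correct. The paper states this as a \emph{Fact} and offers no proof at all, so there is nothing to compare against; your argument is the natural one and closes the gap the paper leaves implicit. The reduction to a single partition is sound (fusion of twins is precisely the inverse of a partition, and every partition produces twins), the pullback identity $[\M_{K'}(p)]_{jl} = [\M_K(p)]_{\pi(j)\pi(l)}$ is verified correctly once one reads the diagonal entries $[\M_K(p)]_{ii}$ as encoding the vertex color of $v_i$ (which is exactly what the definition of $\M_K$ does), and the two feasibility-preserving maps (splitting and aggregation) give the two inequalities by the same change-of-index computation. One small clarification worth recording: the color of the new edge $v_i v_{k+1}$ is in fact already pinned down by the partition definition itself if one allows $x=v_i$ in ``the edge $v'x$ has the same color as $v_ix$'' and interprets the degenerate pair $v_iv_i$ as carrying the vertex color, consistent with the diagonal of $\M_K$; the consistency check against the twin/fusion definition that you perform is a legitimate alternate route to the same conclusion.
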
~\\

We can use both the $f$ and $g$ functions of CRGs to compute the edit distance function.
\begin{thm}[\cite{BM}]
   For any nontrivial hereditary property $\hh$,
   $$ \ed_{\hh}(p)=\inf_{K\in\K(\hh)}g_K(p)=\inf_{K\in\K(\hh)}f_K(p) . $$
\end{thm}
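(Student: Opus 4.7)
The plan is to prove the two inequalities $\ed_\hh(p)\leq\inf_{K\in\K(\hh)}g_K(p)$ and $\ed_\hh(p)\geq\inf_{K\in\K(\hh)}f_K(p)$; combined with the pointwise bound $g_K(p)\leq f_K(p)$ (since the uniform vector $\frac{1}{k}\one$ is feasible for the QP defining $g_K$ and attains the value $f_K(p)$), this sandwiches the three quantities into equality, so $\inf_K f_K(p)=\ed_\hh(p)=\inf_K g_K(p)$.

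For the upper bound, fix $K\in\K(\hh)$ on $k$ vertices and let $\x^*$ solve the QP defining $g_K(p)$. Partition $[n]$ into classes $V_1,\ldots,V_k$ with $|V_i|=\lfloor x_i^* n\rfloor$. From $G=G(n,p)$, construct a graph $H$ by making $V_i$ a clique if $v_i\in\vbk$ or an independent set if $v_i\in\vwk$; making the bipartite graph between $V_i$ and $V_j$ complete if $v_iv_j\in\ebk$, empty if $v_iv_j\in\ewk$, and equal to $G[V_i,V_j]$ if $v_iv_j\in\egk$. Any induced copy of $F$ in $H$ yields, by the natural projection sending each $f\in V_i$ to $v_i$, an embedding $F\arrows K$; hence $K\in\K(\hh)$ forces $H\in\hh$. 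A direct calculation, using concentration of $G(n,p)$ pair densities around $p$, gives $\dist(G,H)/\binom{n}{2}=\x^{*T}\mk(p)\x^*+o(1)=g_K(p)+o(1)$ \whp, so $\ed_\hh(p)\leq g_K(p)$.

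For the lower bound, let $G=G(n,p)$ and let $H\in\hh$ be a graph realizing $\dist(G,\hh)$. Apply Szemer\'edi regularity to $H$ with parameter $\epsilon\ll\min\{p,1-p\}$ to obtain an equipartition $V_1,\ldots,V_k$. Build a CRG $K_H$ on $k$ vertices: color $v_i$ white if the internal $H$-density on $V_i$ is below $\eta$ and black if above $1-\eta$ (iteratively refining any cluster of intermediate internal density until this holds), and color $v_iv_j$ white, gray, or black according to whether the $H$-density on $(V_i,V_j)$ lies in $[0,\eta]$, $(\eta,1-\eta)$, or $[1-\eta,1]$, absorbing the contribution of irregular pairs into an $o(1)$ error. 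The embedding/counting lemma shows that if any $F\in\F(\hh)$ with $|V(F)|\leq k$ embeds into $K_H$, then an induced copy of $F$ would appear in $H$, contradicting $H\in\hh$; hence $K_H\in\K(\hh)$. Comparing $G$ to $H$ pair by pair (using that $G(n,p)$ has density $p+o(1)$ on every pair of size $\Omega(n)$), the contributions are $\approx p|V_i||V_j|$ on white pairs and $(1-p)|V_i||V_j|$ on black pairs, giving $\dist(G,H)/\binom{n}{2}\geq f_{K_H}(p)-o(1)\geq\inf_{K\in\K(\hh)}f_K(p)-o(1)$; concentration of $\dist(G(n,p),\hh)$ around its expectation and passage to the limit yield $\ed_\hh(p)\geq\inf_K f_K(p)$.

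The main obstacle is the lower bound, specifically the coupled choice of $\epsilon$ and $\eta$: they must be small enough that the counting lemma certifies every size-$\leq k$ embedding $F\arrows K_H$ as an induced copy in $H$ (so that $K_H\in\K(\hh)$), yet $\eta$ must be bounded away from $0$ and $1$ so the white/black colorings are forced and the irregular and intermediate-density contributions are absorbable as $o(1)$. Since $k$ is bounded by the regularity constant $M=M(\epsilon,k_0)$, only finitely many forbidden graphs need to be checked for embedding, which is what makes the counting-lemma step rigorous even when $\F(\hh)$ is infinite.
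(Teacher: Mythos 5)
Your overall architecture is the standard one behind this theorem (the paper only cites \cite{BM}; the proof there, following Alon--Stav, has exactly your two halves), and your upper bound is correct: editing $G(n,p)$ according to an optimally weighted $K\in\K(\hh)$ produces a graph all of whose induced subgraphs embed in $K$, at normalized cost $\x^{*T}\mk(p)\x^*+o(1)=g_K(p)+o(1)$, and combining with $g_K\leq f_K$ sandwiches the three quantities.

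The lower bound, however, has a genuine gap, and it is precisely the one you flag without actually resolving. To certify $K_H\in\K(\hh)$ you must, for every $F\in\F(\hh)$ with $F\arrows K_H$, produce an induced copy of $F$ in $H$; the counting lemma does this only when $\epsilon\leq\epsilon_0(|V(F)|,\eta)$. The forbidden graphs that can embed into a $k$-vertex CRG may have size growing with $k$, and $k$ is only bounded by $M(\epsilon,k_0)$ --- so $\epsilon$ must be chosen small in terms of a quantity that depends on $\epsilon$. Saying ``only finitely many forbidden graphs need to be checked'' does not break this circularity; the known proofs break it with the Alon--Fischer--Krivelevich--Szegedy strong regularity lemma, which allows the regularity parameter to be a function $\mathcal{E}(k)$ of the eventual number of parts and supplies subsets $W_i\subseteq V_i$ on which \emph{all} pairs are regular with densities close to those of the $(V_i,V_j)$. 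That device also repairs two further defects in your sketch: (i) an irregular pair cannot simply be ``absorbed into $o(1)$'' --- it is harmless for the cost estimate but fatal for the embedding step, since an embedding $F\arrows K_H$ using that pair cannot be certified; and (ii) a cluster whose internal graph is pseudorandom of density $1/2$ cannot be ``iteratively refined'' into large pieces of density near $0$ or $1$ --- the actual fix is to color each vertex by finding (via Ramsey) a large clique or independent set inside $W_i$, and to note that the diagonal terms contribute only $O(1/k)$ to $f_{K_H}(p)$ in an equipartition, so the vertex colors are essentially free when $k_0$ is large. With the strong regularity lemma in place of plain Szemer\'edi regularity, the rest of your accounting (an $O(\eta)$ loss per pair, then $\eta\to0$) goes through.
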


\begin{rem}\label{rem:min}
   Marchant and Thomason~\cite{MT} prove that, in fact, $\ed_{\hh}(p)=\min_{K\in\K(\hh)}g_K(p)$.  That is, that for every $p\in [0,1]$, there is a CRG, $K\in\K(\hh)$, such that $\ed_{\hh}(p)=g_K(p)$.
\end{rem}~\\

A sub-CRG, $K'$, of a CRG, $K$, is a \textdef{component} if, for all $v\in V(K')$ and all $w\in V(K)-V(K')$, the edge $vw$ is gray.  Theorem~\ref{thm:components} allows the computation of $g_K$ from the $g$ functions of its components.
\begin{thm}\label{thm:components}
   Let $K$ be a CRG with components $K^{(1)},\ldots,K^{(\ell)}$.  Then
   $$ \left(g_K(p)\right)^{-1}=\sum_{i=1}^{\ell}\left(g_{K^{(i)}}(p)\right)^{-1} . $$
\end{thm}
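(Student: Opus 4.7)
The plan is to exploit the fact that every edge between distinct components of $K$ is gray, so the matrix $\mk(p)$ is block-diagonal with blocks $\M_{K^{(1)}}(p),\ldots,\M_{K^{(\ell)}}(p)$. Writing $\x=(\x_1,\ldots,\x_\ell)$ where $\x_i$ is the restriction of $\x$ to $V(K^{(i)})$, the quadratic form decouples as
$$\x^T\mk(p)\x \;=\; \sum_{i=1}^{\ell}\x_i^T\M_{K^{(i)}}(p)\x_i.$$
This reduces the quadratic program defining $g_K(p)$ to a two-stage minimization: first decide how much total mass $s_i:=\x_i^T\one\ge 0$ to place on each component (subject to $\sum_i s_i=1$), then optimize within each component.

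For fixed $s_i$, the inner minimization of $\x_i^T\M_{K^{(i)}}(p)\x_i$ subject to $\x_i\ge\zero$ and $\x_i^T\one=s_i$ is, after the substitution $\y_i=\x_i/s_i$ (when $s_i>0$), exactly $s_i^2$ times the program defining $g_{K^{(i)}}(p)$. Hence the inner optimum is $s_i^2\,g_{K^{(i)}}(p)$, and
$$g_K(p) \;=\; \min\left\{\sum_{i=1}^{\ell} s_i^2\, g_{K^{(i)}}(p) \;:\; \sum_{i=1}^{\ell} s_i=1,\ s_i\ge 0\right\}.$$

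Next I would solve this simple convex program. Assuming $g_i:=g_{K^{(i)}}(p)>0$ for every $i$, Lagrange multipliers (equivalently, Cauchy--Schwarz in the form $1=\sum_i s_i=\sum_i (s_i\sqrt{g_i})\cdot g_i^{-1/2} \le \bigl(\sum_i s_i^2 g_i\bigr)^{1/2}\bigl(\sum_i g_i^{-1}\bigr)^{1/2}$) show that the minimum equals $1/\sum_i g_i^{-1}$, attained at $s_i = g_i^{-1}/\sum_j g_j^{-1}$. Rearranging yields the claimed identity $(g_K(p))^{-1}=\sum_{i=1}^{\ell}(g_{K^{(i)}}(p))^{-1}$.

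The main thing to check carefully is the degenerate case in which some $g_{K^{(i)}}(p)=0$. Then concentrating all mass on that component (taking an optimal $\y_i$ there, padded by zeros elsewhere) is feasible for $K$ and achieves $g_K(p)=0$; conversely, if $g_K(p)=0$, then on an optimal $\x$ the block sum $\sum_i s_i^2 g_{K^{(i)}}(p)$ vanishes, forcing $g_{K^{(i)}}(p)=0$ for some $i$ with $s_i>0$. Under the natural convention $1/0=\infty$ and $1/\infty=0$, the identity continues to hold. This boundary analysis, together with the decomposition and Cauchy--Schwarz step above, completes the argument; no step is genuinely hard, and the only potential pitfall is tracking the zero cases and verifying that equivalence of CRGs (used tacitly to reduce to reduced CRGs within each component) does not affect the $g$-values.
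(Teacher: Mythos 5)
Your proposal is correct and follows essentially the same route as the paper: exploit the block-diagonal structure of $\mk(p)$, reduce to the outer convex program $\min\sum_i s_i^2 g_{K^{(i)}}(p)$ over the simplex, and solve it by Lagrange multipliers (or Cauchy--Schwarz). Your additional care with the degenerate case $g_{K^{(i)}}(p)=0$ is a welcome refinement that the paper's proof glosses over, but it does not change the argument.
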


\begin{proof}
   The matrix $\mk(p)$ is a block-diagonal matrix.  Let $g_i=g_{K^{(i)}}(p)$ for $i=1,\ldots,\ell$ and $g=g_K(p)$.  We may first assign the total weights of the vertices in each component. Then, the relative weights of the vertices in each component is defined by that component's $g$ function.

   Let $\alpha_i$ denote the total weight that the optimal solution of (\ref{eq:gdef}) assigns to the vertices of $K^{(i)}$.  Then, we obtain the following optimization problem:
   $$ g = \left\{\begin{array}{rrcl}
   \min & \alpha_1^2g_1+\cdots+\alpha_{\ell}^2g_{\ell} & & \\
   \mbox{s.t.} & \alpha_1+\cdots+\alpha_{\ell} & = & 1 \\
   & \alpha_1,\ldots,\alpha_{\ell} & \geq & 0 \end{array}\right. $$
   Using the method of Lagrange multipliers, we see that the solution is $\alpha_i=\lambda/g_i$ for $i=1,\ldots,\ell$ and $\lambda^{-1}=\sum_{i=1}^{\ell}g_i^{-1}$.  Substituting these values gives the theorem statement.
\end{proof}~\\

Theorem~\ref{thm:components} can be applied directly to CRGs that have only gray edges. Since the $g$ function for a white vertex is $p$ and the $g$ function for a black vertex is $1-p$, we have Corollary~\ref{cor:components}:
\begin{cor}\label{cor:components}
   If $K$ is a CRG all of whose edges are gray, then
   $$ g_K(p)=\left(\frac{\vwks}{p}+\frac{\vbks}{1-p}\right)^{-1} . $$
\end{cor}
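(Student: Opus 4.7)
The plan is to apply Theorem~\ref{thm:components} after first computing $g_K(p)$ in the two base cases: a CRG consisting of a single white vertex and a CRG consisting of a single black vertex.

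For these base cases, the matrix $\mk(p)$ reduces to the $1\times 1$ matrix whose sole entry is $p$ (if the vertex is white) or $1-p$ (if the vertex is black), directly from the definition. The quadratic program (\ref{eq:gdef}) then has only the trivial feasible point $\x=(1)$, so $g_K(p)=p$ for a single white vertex and $g_K(p)=1-p$ for a single black vertex.

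Next, observe that when every edge of $K$ is gray, each individual vertex forms a component of $K$ in the sense defined just before Theorem~\ref{thm:components}: any vertex $v$ and any other vertex $w\in V(K)-\{v\}$ are joined by a gray edge. Hence $K$ has exactly $\vwks+\vbks$ components, of which $\vwks$ are single white vertices (each with $g=p$) and $\vbks$ are single black vertices (each with $g=1-p$).

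Applying Theorem~\ref{thm:components} and summing the reciprocals over components gives
\[
\left(g_K(p)\right)^{-1} = \vwks\cdot\frac{1}{p}+\vbks\cdot\frac{1}{1-p} = \frac{\vwks}{p}+\frac{\vbks}{1-p},
\]
which yields the stated formula upon inversion. There is essentially no obstacle here; this is a direct substitution corollary, and the only point that requires care is verifying that isolated vertices qualify as components under the paper's definition so that Theorem~\ref{thm:components} is applicable.
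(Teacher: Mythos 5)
Your proof is correct and matches the paper's own justification, which likewise treats each vertex as a single-vertex component (with $g=p$ for white and $g=1-p$ for black) and applies Theorem~\ref{thm:components}. No issues.
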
~\\

Proposition~\ref{prop:nogray} gives the edit distance function for some special CRGs that have no gray edges.
\begin{prop}\label{prop:nogray} Let $K$ be a CRG on $k$ vertices and no gray edges as follows:
\begin{itemize}
   \item If all vertices are white and all edges are black, then $g_K(p)=\min\{p,1-p+(2p-1)/k\}$.
   \item If all vertices are black and all edges are white, then $g_K(p)=\min\{p+(1-2p)/k,1-p\}$.
\end{itemize}
\end{prop}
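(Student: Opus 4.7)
The plan is to rewrite $\mk(p)$ as a linear combination of the identity $\I$ and the all-ones matrix $\J$, and then use the simplex constraint $\x^T\one=1$ to collapse the quadratic program (\ref{eq:gdef}) to a one-dimensional optimization in $\|\x\|^2$.

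For the first bullet I would observe that every diagonal entry of $\mk(p)$ equals $p$ (white vertices) while every off-diagonal entry equals $1-p$ (black edges), so
$$ \mk(p) = (1-p)\J + (2p-1)\I. $$
Because $\x^T\J\x = (\x^T\one)^2 = 1$ on the feasible region, the objective becomes $(1-p)+(2p-1)\|\x\|^2$. On the simplex $\{\x\ge\zero:\x^T\one=1\}$, the square-norm $\|\x\|^2$ is well known to range between its minimum $1/k$ (attained at the barycenter $\x=(1/k)\one$) and its maximum $1$ (attained at any coordinate vector). Splitting on the sign of $2p-1$ then gives optimal value $(1-p)+(2p-1)/k$ when $p\ge 1/2$ and optimal value $(1-p)+(2p-1)=p$ when $p<1/2$; a direct comparison of these two candidate expressions shows that $(1-p)+(2p-1)/k \le p$ precisely when $p\ge 1/2$, so both regimes assemble into the single formula $g_K(p)=\min\{p,\,1-p+(2p-1)/k\}$.

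The second bullet will be handled by the same recipe applied to $\mk(p)=p\J+(1-2p)\I$. Splitting on the sign of $1-2p$ and reusing the extremal values of $\|\x\|^2$ on the simplex produces optimal value $p+(1-2p)/k$ when $p\le 1/2$ and optimal value $1-p$ when $p>1/2$, yielding $g_K(p)=\min\{p+(1-2p)/k,\,1-p\}$.

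There is no substantial obstacle here: the only conceptual step is the observation that $\mk(p)$ decomposes as $\alpha\J+\beta\I$, after which $\J$ contributes a constant on the simplex and the program reduces to a one-dimensional optimization of $\|\x\|^2$. Neither Lagrange multipliers nor KKT analysis are needed, since the minimum and maximum of $\|\x\|^2$ over the simplex are elementary and the optimizers are explicit.
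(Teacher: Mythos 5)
Your proof is correct, and it is worth noting that the paper leaves Proposition~\ref{prop:nogray} unproven, so there is no in-paper argument to compare against. Your decomposition $\mk(p)=\alpha\J+\beta\I$ is clean and sound: the diagonal entries are $p$ (resp.\ $1-p$) and the off-diagonal entries are $1-p$ (resp.\ $p$), so the decompositions $(1-p)\J+(2p-1)\I$ and $p\J+(1-2p)\I$ are exactly right, the constraint $\x^T\one=1$ makes $\x^T\J\x=1$, and the quadratic program collapses to minimizing $\beta\|\x\|^2$ over the simplex, where $1/k\le\|\x\|^2\le 1$ with the stated extremizers. The sign split on $\beta$ and the final reconciliation into a single $\min$ expression both check out: for the first bullet $1-p+(2p-1)/k\le p$ holds iff $(1-2p)(1-1/k)\ge 0$, i.e.\ iff $p\le 1/2$, and symmetrically for the second. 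The only stylistic remark is that the paper's preceding Theorem~\ref{thm:components} is proved via Lagrange multipliers, and one might guess the author had a similar KKT-style argument in mind here; your route via the $\J,\I$ decomposition is arguably more elementary and avoids any appeal to stationarity conditions. No gap.
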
~\\

\section{Estimation of the edit distance function}
\label{sec:estim}
Denote $K(r,s)$ to be the CRG with $r$ white vertices, $s$ black vertices and all gray edges.  Let $\hh$ be a hereditary property with $\hh=\bigcap_{H\in\F(\hh)}\forb(H)$.  The notion of $(r,s)$-colorability is discussed by Alon and Stav~\cite{AS2} where they focus on hereditary properties that are complement-invariant.

The \textdef{chromatic number} of $\hh$, denoted $\chi(\hh)$ or just $\chi$, where the context is clear, is $\min\left\{\chi(H) : H\in\F(\hh)\right\}$.  The \textdef{complementary chromatic number}\footnote{Unfortunately, the term ``cochromatic number'' is taken.  It should be noted that the cochromatic number, although its definition resembles that of $\chib$, is not the same parameter.} of $\hh$, denoted $\overline{\chi}(\hh)$ or $\ovchi$, is $\min\left\{\chi(\overline{H}) : H\in\F(\hh)\right\}$.  The \textdef{binary chromatic number} is $\max\left\{k+1 : \exists r,s, r+s=k,  H\not\arrows K(r,s), \forall H\in\F(\hh)\right\}$.

The \textdef{clique spectrum} of $\hh$ is the set
$$ \Gamma(\hh)\stackrel{\rm def}{=}\left\{(r,s) : H\not\arrows K(r,s), \forall H\in\F(\hh)\right\} . $$
The clique spectrum has a number of useful properties.  For example, it is monotone in the sense that if $(r,s)\in\Gamma(\hh)$ and $0\leq r'\leq r$ and $0\leq s'\leq s$, then $(r',s')\in\Gamma(\hh)$.  As a result, the clique spectrum of a hereditary property can be expressed as a Young tableau.  An \textdef{extreme point} of the clique spectrum $\Gamma$ is a pair $(r,s)\in\Gamma$ for which both $(r+1,s)\not\in\Gamma$ and $(r,s+1)\not\in\Gamma$. Let $\Gamma^*$ denote the extreme points of clique spectrum $\Gamma$. Figure~\ref{fig:cyc9young} shows the clique spectrum of the cycle $C_9$ expressed as a Young tableau, with the extreme points of the clique spectrum marked.

\begin{figure}[ht]\label{fig:cyc9young}
{\hfill\includegraphics[height=1.0in]{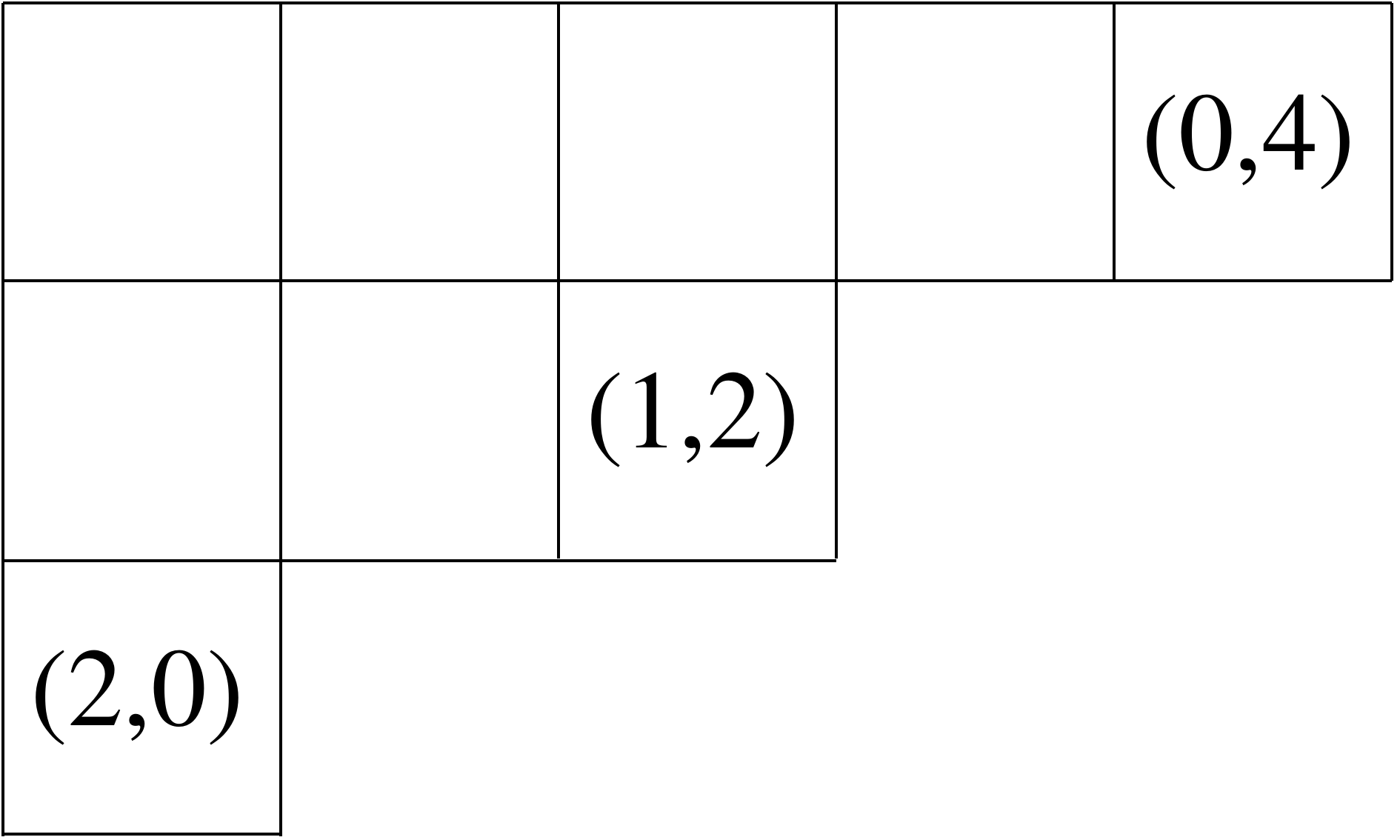}\hfill}
\caption{The clique spectrum of $C_9$ expressed as a Young tableau.  The extreme points of the clique spectrum are labeled.}
\end{figure}

\subsection{Approximating $\ed_{\hh}(p)$ by $\gamma_{\hh}(p)$}

Corollary~\ref{cor:components} gives that $g_{K(r,s)}(p)=\frac{p(1-p)}{r(1-p)+sp}$, which follows directly from Theorem~\ref{thm:components}.  Define the function $\gamma_{\hh}(p)$ as follows:
$$ \gamma_{\hh}(p)\stackrel{\rm def}{=}\min\left\{g_{K(r,s)}(p) : (r,s)\in\Gamma(\hh)\right\}=
\min\left\{\frac{p(1-p)}{r(1-p)+sp} : (r,s)\in\Gamma(\hh)\right\} . $$
Clearly, $\ed_{\hh}(p)\leq\gamma_{\hh}(p)$.  Moreover, $\gamma_{\hh}(p)=\min\left\{g_{K(r,s)}(p) : (r,s)\in\Gamma^*(\hh)\right\}$; i.e, only $(r,s)$ that are extreme points of the clique spectrum need to be used to compute $\gamma$.  The value of the function $\gamma_{\hh}(p)$ is that it is computable for any hereditary property.

\subsection{Basic observations on $\ed_{\hh}(p)$}

The following is a summary of basic facts about the edit distance function.  Item (\ref{it:bcn}) comes from Alon and Stav~\cite{AS1}.  Item (\ref{it:concon}) comes from \cite{BM}.  The remaining items are trivial.
\begin{thm}\label{thm:basic}
   Let $\hh$ be a nontrivial hereditary property with chromatic number $\chi$, complementary chromatic number $\ovchi$, binary chromatic number $\chib$ and edit distance function $\ed_{\hh}(p)$.
   \begin{enumerate}
      \item If $\chi>1$, then $\ed_{\hh}(p)\leq p/(\chi-1)$. \label{it:chi}
      \item If $\ovchi>1$, then $\ed_{\hh}(p)\leq (1-p)/(\ovchi-1)$. \label{it:ovchi}
      \item $\ed_{\hh}(1/2)=1/(2(\chib-1))=\gamma_{\hh}(1/2)$. \label{it:bcn}
      \item $\ed_{\hh}(p)$ is continuous and concave down. \label{it:concon}
      \item $\ed_{\hh}(p)=\ed_{\overline{\hh}}(1-p)$. \label{it:comp}
   \end{enumerate}
\end{thm}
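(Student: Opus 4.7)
My plan is to apply the formula $\ed_{\hh}(p) = \inf_{K \in \K(\hh)} g_K(p)$ from Section~\ref{sec:defns}, exhibiting explicit witness CRGs for the upper bounds in (i), (ii), (v) and citing prior work for (iii) and (iv).

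For item (i), I would fix any $H \in \F(\hh)$ with $\chi(H) = \chi$ and take $K := K(\chi - 1, 0)$, the CRG with $\chi - 1$ white vertices and all edges gray. Unwinding the embedding definition, a map $H' \arrows K$ is equivalent to a proper $(\chi - 1)$-coloring of $H'$: distinct white vertices are joined by a gray edge, which accommodates either adjacency or non-adjacency, but vertices of $H'$ sent to the same white vertex must be non-adjacent. Since every $H' \in \F(\hh)$ has $\chi(H') \geq \chi$, no such map exists, so $K \in \K(\hh)$; Corollary~\ref{cor:components} then gives $g_K(p) = p/(\chi - 1)$. Item (ii) is the mirror image: take $K(0, \ovchi - 1)$, and observe that an embedding of $H'$ corresponds to a proper $(\ovchi - 1)$-coloring of $\overline{H'}$.

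For item (v), I would use a color-swap bijection. Given a CRG $K$, define $K'$ by swapping white with black on every vertex and edge while leaving gray edges fixed. A direct check against the embedding definition yields $H' \arrows K$ if and only if $\overline{H'} \arrows K'$, so $K \in \K(\hh)$ if and only if $K' \in \K(\overline{\hh})$. Comparing entries of the associated matrices shows $\mathbf{M}_{K'}(p) = \mk(1-p)$, hence the quadratic programs defining $g$ coincide and $g_{K'}(p) = g_K(1-p)$; taking the infimum over $\K(\hh)$ and $\K(\overline{\hh})$ gives $\ed_{\hh}(p) = \ed_{\overline{\hh}}(1-p)$.

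For item (iii), the evaluation $\gamma_{\hh}(1/2) = 1/(2(\chib - 1))$ is immediate from $g_{K(r,s)}(1/2) = 1/(2(r+s))$ (Corollary~\ref{cor:components}) together with the definition of $\chib$; the matching lower bound $\ed_{\hh}(1/2) \geq 1/(2(\chib - 1))$ is the main result of Alon and Stav~\cite{AS1}, which I would cite directly. Item (iv), continuity and concavity, is cited verbatim from \cite{BM}. The only step requiring real care is verifying the embedding/color-swap correspondence in (v); the rest unwinds from the definitions.
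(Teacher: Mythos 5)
Your proposal is correct and consistent with the paper's own treatment: the paper simply attributes item (iii) to Alon--Stav, item (iv) to \cite{BM}, and declares items (i), (ii) and (v) trivial, and you have supplied precisely the standard witness CRGs $K(\chi-1,0)$, $K(0,\ovchi-1)$ and the black--white swap involution that make those items trivial within the $\ed_{\hh}=\inf_K g_K$ framework.
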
~\\

There are a number of immediate corollaries of Theorem~\ref{thm:basic} that help estimate the edit distance functions.  Some of the most useful are summarized in Corollary~\ref{cor:basic} and we leave the proof of them to the reader.
\begin{cor}\label{cor:basic}
   Let $\hh$ be a nontrivial hereditary property with binary chromatic number $\chib$.  Let $(r,s)$ be extreme points in the clique spectrum of $\hh$ such that $r+s=\chib$.
   \begin{enumerate}
      \item If $\chi=\chib$, then $\ed_{\hh}(p)=p/(\chi-1)$ for all $p\in [0,1/2]$.
      \item If $\chib=\ovchi$, then $\ed_{\hh}(p)=(1-p)/(\chib-1)$ for all $p\in [1/2,1]$.
      \item If $r\geq s$, then $p_{\hh}^*\geq 1/2$.
      \item If $r\leq s$, then $p_{\hh}^*\leq 1/2$.
      \item For any $(r,s)$ in the clique spectrum, $d_{\hh}^*\leq(\sqrt{r}+\sqrt{s})^{-2}$.
   \end{enumerate}
\end{cor}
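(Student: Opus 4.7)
The plan is to dispatch the five items by combining the basic facts gathered in Theorem~\ref{thm:basic} with elementary features of concave functions on $[0,1]$.

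For (i), Theorem~\ref{thm:basic}(\ref{it:chi}) gives the global upper bound $\ed_{\hh}(p)\le p/(\chi-1)$, so in particular $\ed_{\hh}(0)=0$. When $\chi=\chib$, Theorem~\ref{thm:basic}(\ref{it:bcn}) provides $\ed_{\hh}(1/2)=1/(2(\chi-1))$, which coincides with the line $p/(\chi-1)$ at $p=1/2$. Concavity (Theorem~\ref{thm:basic}(\ref{it:concon})) then forces $\ed_{\hh}$ to lie above the chord joining $(0,0)$ and $(1/2,1/(2(\chi-1)))$ on $[0,1/2]$; since that chord is exactly $p/(\chi-1)$, the two-sided squeeze yields $\ed_{\hh}(p)=p/(\chi-1)$ throughout $[0,1/2]$. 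Item (ii) follows by the mirror argument, using Theorem~\ref{thm:basic}(\ref{it:ovchi}), $\chib=\ovchi$, and $\ed_{\hh}(1)=0$, which produces the chord on $[1/2,1]$.

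For (iii), fix an extreme point $(r,s)$ whose $r+s$ realizes the binary chromatic number parameter. Then $\ed_{\hh}(p)\le g_{K(r,s)}(p)=p(1-p)/(r(1-p)+sp)$ on $[0,1]$, with equality at $p=1/2$ by Theorem~\ref{thm:basic}(\ref{it:bcn}). A direct computation gives $g_{K(r,s)}'(1/2)=(r-s)/(r+s)^2$, which is $\ge 0$ exactly when $r\ge s$. For $p<1/2$, dividing the inequality $\ed_{\hh}(p)-\ed_{\hh}(1/2)\le g_{K(r,s)}(p)-g_{K(r,s)}(1/2)$ through by the negative quantity $p-1/2$ and letting $p\uparrow 1/2$ gives the one-sided bound $\ed_{\hh}'(1/2^{-})\ge g_{K(r,s)}'(1/2)\ge 0$. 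Concavity then propagates this sign to all of $[0,1/2]$: the left derivative of $\ed_{\hh}$ is non-negative on this interval, so $\ed_{\hh}$ is non-decreasing there and the maximum of $\ed_{\hh}$ must occur at some $p\ge 1/2$. Item (iv) is symmetric under $p\mapsto 1-p$ and $r\leftrightarrow s$.

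Item (v) is immediate from $\ed_{\hh}(p)\le g_{K(r,s)}(p)$ for any $(r,s)\in\Gamma(\hh)$: optimizing the right-hand side over $p\in[0,1]$ places the maximum at $p_{0}=\sqrt{r}/(\sqrt{r}+\sqrt{s})$ with value $(\sqrt{r}+\sqrt{s})^{-2}$, so $d_{\hh}^{*}\le(\sqrt{r}+\sqrt{s})^{-2}$. The main technical point is the derivative comparison in (iii)--(iv): $\ed_{\hh}$ need not be differentiable at $1/2$, so the argument has to be phrased via one-sided derivatives and their monotonicity under concavity, rather than by appealing to ordinary differentiability of $\ed_{\hh}$.
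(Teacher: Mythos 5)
Your proof is correct and supplies exactly what the paper omits: the text states ``we leave the proof of them to the reader,'' so there is no proof in the paper to compare against. Your treatment of items (i) and (ii) via the chord--concavity squeeze is the natural route and is complete. For (iii) and (iv) the one-sided-derivative comparison is the right care to take, and the step from $D^{-}_{\ed_{\hh}}(1/2)\geq g_{K(r,s)}'(1/2)\geq 0$ to monotonicity on $[0,1/2]$ via the fact that one-sided derivatives of a concave function are non-increasing is valid. Item (v) is handled correctly; the maximizer $p_0=\sqrt{r}/(\sqrt{r}+\sqrt{s})$ and value $(\sqrt{r}+\sqrt{s})^{-2}$ follow since $g_{K(r,s)}(p)=\bigl(r/p+s/(1-p)\bigr)^{-1}$ and the inner expression is convex with a unique minimum, so $g_{K(r,s)}$ is unimodal (this also covers the degenerate cases $r=0$ or $s=0$).

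One small point worth flagging, which you implicitly handle by writing ``whose $r+s$ realizes the binary chromatic number parameter'': with the paper's definition $\chi_B=\max\{k+1:\exists\,r,s,\ r+s=k,\ (r,s)\in\Gamma(\hh)\}$, an extreme point $(r,s)$ of maximal sum satisfies $r+s=\chi_B-1$, not $r+s=\chi_B$ as written in the corollary's hypothesis. The computation $g_{K(r,s)}(1/2)=1/(2(r+s))=1/(2(\chi_B-1))=\ed_{\hh}(1/2)$, on which items (iii) and (iv) lean, requires $r+s=\chi_B-1$, and that is the convention you should adopt; the discrepancy is a typo in the corollary's statement, not a gap in your argument.
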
~\\

\section{$\hh\subseteq\forb(K_h)$}
\label{sec:complete}
In this section, we prove Theorem~\ref{thm:complete}, which bounds the edit distance function for hereditary properties that have no copy of a complete graph.  Note that $\hh\subseteq\forb(K_h)$ if and only if $K_h\in\F(\hh)$.~\\

\begin{proofcite}{Theorem~\ref{thm:complete}}
   Since $\ovchi(\hh)=1$ and $\hh$ is not trivial, $\chi(\hh)>1$.  If $K\in\K(\hh)$, then $K$ cannot have a black vertex, otherwise $K_h\arrows K$.  So, we may assume that $K\in\K(\hh)$ has all white vertices.  In every set of $\chi$ white vertices, there must be a non-gray edge. By Tur\'an's theorem, this means that $K$ has at least $\binom{k}{2}-\frac{\chi-2}{\chi-1}\cdot\frac{k^2}{2}$ non-gray edges.  Hence,
   \begin{eqnarray*}
      \ed_{\hh}(p) & \geq & f_K(p)\geq\frac{1}{k^2}\left[pk +2\min\{p,1-p\}\left(\binom{k}{2}-\frac{\chi-2}{\chi-1}\cdot\frac{k^2}{2}\right)\right] \\
      & \geq & \frac{\min\{p,1-p\}}{\chi-1}
   \end{eqnarray*}

   In every set of $m$ white vertices, there must be a white edge.  Again, by Tur\'an's theorem, $\ed_{\hh}(p)\geq f_K(p)\geq\frac{p}{m-1}$.  So, $\ed_{\hh}(p)$ is bounded below by both $p/(\chi-1)$ and the line segment connecting the points $\left(1/2,\frac{1}{2(\chi-1)}\right)$ and $\left(1,\frac{1}{m-1}\right)$.  Hence,
   $$ \ed_{\hh}(p)\geq\min\left\{\frac{p}{\chi-1},\frac{1-p}{\chi-1}+\frac{2p-1}{m-1}\right\} . $$

   As to the upper bound, we give two CRGs into which no $H\in\F(\hh)$ can map.  The first is $K^{(1)}=K(\chi-1,0)$, the CRG with $\chi-1$ white vertices and all edges gray.  By Corollary~\ref{cor:components}, $g_{K^{(1)}}(p)=p/(\chi-1)$.

   The second CRG, $K^{(2)}$, is $m-1$ white vertices and all black edges. If there were some $H\in\F(\hh)$ such that $H\arrows K^{(2)}$, then $H$ would be a complete $(m-1)$-partite graph, which is forbidden by our choice of $m$. By Proposition~\ref{prop:nogray}, $g_{K^{(2)}}(p)=\min\{p,1-p+(2p-1)/(m-1)\}$. So,
   $$ \ed_{\hh}(p)\leq\min\left\{\frac{p}{\chi-1},p,1-p+\frac{2p-1}{m-1}\right\} . $$

   The final statement comes from the observation that if $\hh=\forb(K_h)$, then $\chi=m=h$.
\end{proofcite}~\\

By Theorem~\ref{thm:basic}(\ref{it:comp}) we have the similar result for empty graphs: Let $\hh$ be a nontrivial hereditary property such that $\F(\hh)$ contains an empty graph and let $h$ be the minimum positive integer such that $\hh\subseteq\forb(\overline{K_h})$.  Let $\ovchi$ be the complementary chromatic number\footnote{The term $\ovchi(\hh)$ is, the smallest number, $k$, such that no member of $\F(\hh)$ can be partitioned into $k$ cliques. In fact, $\ovchi(\hh)=\chi(\overline{\hh})$.} of $\hh$ and $m$ be the smallest positive integer such that $\F(\hh)$ contains a $m$ disjoint cliques.  Clearly, $\ovchi\leq m\leq h$.
$$ \min\left\{\frac{p}{\ovchi-1}+\frac{1-2p}{m-1},\frac{1-p}{\ovchi-1}\right\}\leq
   \ed_{\hh}(p)\leq\min\left\{p+\frac{1-2p}{m-1},\frac{1-p}{\ovchi-1}\right\} . $$
In particular, $\ed_{\forb(\overline{K_h})}(p)=\frac{1-p}{\ovchi-1}$.~\\

\section{The $p$-core CRGs}
\label{sec:pcores}
Recall that, in Remark~\ref{rem:min} we observed that $\ed_{\hh}(p)=\min\left\{g_K(p) : K\in\K(\hh)\right\}$.  That is, for any hereditary property $\hh$ and $p\in [0,1]$, there is a CRG, $K\in\K(\hh)$ such that $\ed_{\hh}(p)=g_K(p)$.  This is found by looking at so-called $p$-core CRGs.  A CRG, $K$, is a \textdef{$p$-core CRG}, or simply a $p$-core, if $g_{K}(p)<g_{K'}(p)$ for all nontrivial sub-CRGs $K'$ of $K$.

Moreover, $p$-cores can be easily classified:
\begin{thm}[Marchant-Thomason,~\cite{MT}] \label{thm:cores}
   Let $K$ be a $p$-core CRG.
   \begin{itemize}
      \item If $p=1/2$, then $K$ has all of its edges gray.
      \item If $p<1/2$, then $\ebk=\emptyset$ and there are no white edges incident to white vertices.
      \item If $p>1/2$, then $\ewk=\emptyset$ and there are no black edges incident to black vertices.
   \end{itemize}
\end{thm}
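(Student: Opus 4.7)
The plan is to prove each of the three assertions by contradiction: supposing $K$ is a $p$-core containing one of the forbidden configurations, I will exhibit a proper sub-CRG $K' \subsetneq K$ with $g_{K'}(p) \le g_K(p)$, violating the defining strict inequality of a $p$-core. The key preliminary is that for any $p$-core $K$, the optimizer $\x^*$ of the quadratic program~(\ref{eq:gdef}) satisfies $x_v^* > 0$ for every $v \in V(K)$: otherwise the restriction of $\x^*$ to $V(K) \setminus \{v\}$ gives a feasible vector on $K - v$ of value $g_K(p)$, contradicting strictness. The KKT conditions then force $(\mk(p)\x^*)_v = g_K(p)$ for every vertex $v$.

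The workhorse is a \emph{weight-transfer lemma}: given any pair $u, v \in V(K)$, set $x'_u = x_u^* + x_v^*$, $x'_w = x_w^*$ for $w \ne u,v$, and delete $v$. A direct expansion using the KKT equality $(\mk(p)\x^*)_u = (\mk(p)\x^*)_v$ yields
\[
(\x')^T \M_{K - v}(p)\, \x' \;=\; g_K(p) + (x_v^*)^2 \bigl[\, M_{uu} + M_{vv} - 2 M_{uv}\,\bigr],
\]
writing $M_{ab} = [\mk(p)]_{ab}$. Whenever the bracketed quantity is non-positive, $g_{K-v}(p) \le g_K(p)$, giving the contradiction. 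This settles three of the four situations in Theorem~\ref{thm:cores}: (i) at $p = 1/2$, every relevant entry of $\mk(p)$ equals $\tfrac12$, so the bracket is $0$ for any non-gray edge; (ii) for $p < 1/2$ and any black edge $uv$, $M_{uv} = 1-p$ while $\tfrac12(M_{uu}+M_{vv}) \le 1-p$ regardless of vertex colors; (iii) for $p < 1/2$ and a white edge joining two white vertices, all three entries equal $p$.

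The remaining case---a white edge $uv$ with $u$ white, $v$ black, and $p<1/2$---has bracket $1 - 2p > 0$, and weight transfer alone does not close the argument. My strategy is to reduce to the matched sub-case by forming the auxiliary CRG $\tilde K$ obtained from $K$ by recoloring $v$ from black to white. Evaluating the vector $\x^*$ on $\tilde K$ gives
\[
g_{\tilde K}(p) \;\le\; (\x^*)^T \M_{\tilde K}(p)\, \x^* \;=\; g_K(p) - (x_v^*)^2 (1 - 2p) \;<\; g_K(p).
\]
Now the edge $uv$ is white-between-two-white-vertices in $\tilde K$, and crucially $\tilde K - v$ and $K - v$ are the same CRG, since the recoloring only touched the now-deleted vertex $v$. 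If $\tilde K$ is itself a $p$-core, the matched sub-case applied inside $\tilde K$ yields $g_{\tilde K - v}(p) \le g_{\tilde K}(p)$, hence $g_{K-v}(p) < g_K(p)$, the sought contradiction. Otherwise, iterate by passing to a strict sub-CRG of $\tilde K$ with no larger $g$-value, managing the iteration so that $v$ is eventually deleted---for instance by a greedy rule that always prefers removing $v$ when doing so does not raise $g$, an option licensed by the matched-case transfer whenever $u$ and $v$ both carry positive weight in the current optimizer.

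The main technical obstacle is precisely the bookkeeping in this iteration: one must argue that $v$ is ultimately deleted, producing a sub-CRG $K - S$ of $K$ with $g_{K-S}(p) \le g_{\tilde K}(p) < g_K(p)$, rather than having $u$ removed first and $v$ retained. The assertion for $p > 1/2$ follows from the $p < 1/2$ case by the color-swap symmetry $p \leftrightarrow 1-p$, exchanging the roles of black and white throughout.
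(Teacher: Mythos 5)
First, a point of reference: the paper does not prove Theorem~\ref{thm:cores} at all --- it is quoted from Marchant--Thomason~\cite{MT} --- so your proposal has to stand on its own. Its foundations are sound: positivity of the optimizer for a $p$-core, the resulting identity $\mk(p)\x^*=g_K(p)\one$ (equation~(\ref{eq:balanced}) of the paper), and the weight-transfer estimate $g_{K-v}(p)\le g_K(p)+(x_v^*)^2\bigl[M_{uu}+M_{vv}-2M_{uv}\bigr]$ are all correct. This cleanly disposes of every non-gray edge at $p=1/2$, of all black edges for $p<1/2$, and of white edges joining two white vertices for $p<1/2$, together with the mirror cases for $p>1/2$.

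The gap is in the one remaining configuration --- a white edge $uv$ with $u$ white, $v$ black, $p<1/2$ --- and it is not mere bookkeeping. Recoloring $v$ white gives $g_{\tilde K}(p)<g_K(p)$, but $\tilde K$ is not a sub-CRG of $K$, so nothing is contradicted until $v$ (specifically $v$, not $u$) is deleted. Let $\y$ be an optimizer for $\tilde K$. If $y_v=0$, or if $y_u,y_v>0$ (so the KKT equality holds at both vertices and the white--white transfer from $v$ to $u$ is free), you are done because $\tilde K-v=K-v$. But if $y_v>0$ and $y_u=0$ --- which can genuinely occur, for instance when $u$ carries several other white edges to heavy black vertices while $v$ is otherwise all gray, making $v$ much cheaper than $u$ in $\tilde K$ --- then shifting weight from $v$ to $u$ changes the objective by $2y_v\bigl[(\M_{\tilde K}\y)_u-g_{\tilde K}(p)\bigr]\ge 0$ with no upper control, and the only deletion your lemma licenses is of $u$, which leaves a CRG that still contains the recolored $v$ and hence is still not a sub-CRG of $K$. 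Recoloring $v$ back to black at that stage costs $(1-2p)$ times the square of $v$'s \emph{new} weight, which need not be offset by the earlier gain $(1-2p)(x_v^*)^2$. So the ``greedy rule that always prefers removing $v$'' is not justified by anything you have established, and the iteration is not shown to terminate in a proper sub-CRG of $K$. The natural variants fail for the same reason: deleting $u$ instead (same bracket $1-2p>0$), replacing $v$ by a twin of $u$, or redistributing $x_u^*$ proportionally over the white neighborhood of $u$ all raise the objective by a positive multiple of $(1-2p)$. This mixed case is precisely where a genuinely new idea is needed, and the proposal does not yet contain it.
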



The optimal solution to the quadratic program in (\ref{eq:gdef}) is, in some sense, regular, as described in Theorem~\ref{thm:reg}. Theorem~\ref{thm:reg} is the ``symmetrization'' referenced in the title.\footnote{Pikhurko~\cite{Pik} uses this term for the approach by Sidorenko~\cite{Sid}.} The fundamental observation is that if every optimal solution, $\x^*$, of (\ref{eq:gdef}) has no zero entries, then
\begin{equation}\label{eq:balanced}
   \mk(p)\cdot\x^*=g_K(p)\one ,
\end{equation}
where $\one$ is the all-ones vector. Of course, an optimal solutions having no zero entries corresponds, by definition, to a CRG being $p$-core and that the optimal solution to quadratic program in (\ref{eq:gdef}) is unique.

By Theorem~\ref{thm:cores}, if $K$ is a $p$-core CRG, then no edge has the same color as either of its endvertices, so we can reinterpret (\ref{eq:balanced}) as follows:
\begin{thm}[Marchant-Thomason,~\cite{MT}]\label{thm:reg}
   Let $K$ be a $p$-core CRG.  There is a unique vector $\x$ that is an optimal solution to the quadratic program in (\ref{eq:gdef}).  For all $v\in V(K)$, let the entry of $\x$ corresponding to $v$ be $\x(v)$. For each $v\in V(K)$,
   $$ g_K(p)=\x(v)\left[p\,\dw(v)+(1-p)\,\db(v)\right] , $$
   where
   $$ \dw(v)=\left\{\begin{array}{ll} \x(v), & \mbox{if $v\in\vwk$;} \\ \sum_{vz\in\ewk}\x(z), & \mbox{if $v\in\vbk$;} \end{array}\right. $$ and
   $$ \db(v)=\left\{\begin{array}{ll} \x(v), & \mbox{if $v\in\vbk$;} \\ \sum_{vz\in\ebk}\x(z), & \mbox{if $v\in\vwk$.} \end{array}\right.  $$
\end{thm}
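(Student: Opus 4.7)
The plan is to read off the formula from the first-order (KKT) stationarity conditions for the quadratic program (\ref{eq:gdef}), and then to argue uniqueness via a perturbation along the line between any two putative optima.

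The first step is to set up the Lagrangian $L(\x,\lambda,\mu) = \x^T \mk(p)\x - \lambda(\x^T\one - 1) - \mu^T\x$ for (\ref{eq:gdef}), with $\mu\geq\zero$. Using symmetry of $\mk(p)$, stationarity reads $2\mk(p)\x - \lambda\one - \mu = \zero$, with complementary slackness $\mu(v)\x(v) = 0$. The $p$-core hypothesis now enters: any optimal $\x$ must have $\x(v) > 0$ for every $v\in V(K)$, since otherwise the restriction of $\x$ to its support would be feasible for the QP on the proper sub-CRG $K'$ induced by that support, giving $g_{K'}(p)\leq g_K(p)$ and contradicting the defining property of a $p$-core. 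With $\x > \zero$, complementary slackness forces $\mu = \zero$, so $2\mk(p)\x = \lambda\one$; left-multiplying by $\x^T$ and using $\x^T\one = 1$ pins down $\lambda = 2g_K(p)$ and yields the balanced equation (\ref{eq:balanced}): $\mk(p)\x = g_K(p)\one$.

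The second step is to translate the $v$-th row of $\mk(p)\x = g_K(p)\one$ into the combinatorial language of the theorem. Unpacking $\mk(p)$, the $v$-th row is the sum of a diagonal contribution ($p\,\x(v)$ if $v\in\vwk$, or $(1-p)\,\x(v)$ if $v\in\vbk$) together with the off-diagonal contributions $p\sum_{vz\in\ewk}\x(z) + (1-p)\sum_{vz\in\ebk}\x(z)$. Theorem~\ref{thm:cores} then eliminates the extraneous terms: for $p<1/2$ there are no black edges at all and no white edges touch white vertices, so the $\ewk$-sum at a white vertex and the entire $\ebk$-sum vanish; for $p>1/2$ the symmetric statement holds; and for $p=1/2$ every off-diagonal entry is zero. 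What survives in each case is exactly $p\,\dw(v) + (1-p)\,\db(v)$ after matching the case definitions of $\dw$ and $\db$, which is the identity asserted by the theorem.

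The third step, and the main obstacle, is uniqueness. Suppose $\x_1\neq\x_2$ are both optimal; both have full support by the argument above, and both satisfy the balanced equation. Consequently $\x_1^T\mk(p)\x_2 = \x_1^T(g_K(p)\one) = g_K(p)$ and symmetrically, so writing $\x_t := (1-t)\x_1 + t\x_2$ one finds $\x_t^T\mk(p)\x_t = g_K(p)$ identically in $t\in\mathbb{R}$. Since $\x_1\neq\x_2$, there is a smallest $t^*>1$ at which some coordinate of $\x_{t^*}$ first hits $0$ while the others remain nonnegative (the sum is preserved at $1$). Restricting $\x_{t^*}$ to its nonzero support gives a feasible point for the QP on the proper sub-CRG $K'$ induced by that support, with objective value $g_K(p)$; hence $g_{K'}(p)\leq g_K(p)$, once again contradicting the $p$-core property. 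The KKT portion is standard once full support has been secured; the real content is this polynomial-along-a-line computation combined with the strict sub-CRG inequality built into the definition of a $p$-core.
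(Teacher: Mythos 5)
The paper does not prove Theorem~\ref{thm:reg}; it is attributed to Marchant and Thomason~\cite{MT}, and the surrounding text only records equation~(\ref{eq:balanced}) as the fundamental observation, without argument. So there is no in-paper proof to compare against, and your proposal supplies one. It is correct. Step one is sound: $p$-coreness forces every optimizer to have full support (restricting a zero-padded optimizer to its support would witness $g_{K'}(p)\le g_K(p)$ for a proper nontrivial sub-CRG $K'$), which kills the multipliers on the nonnegativity constraints and upgrades KKT stationarity to $\mk(p)\x=g_K(p)\one$; the needed constraint qualification is automatic on a simplex because the active nonnegativity constraints together with $\one^T\x=1$ are always linearly independent there. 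Step two correctly invokes Theorem~\ref{thm:cores} to strip the vanishing terms from the $v$-th row of $\mk(p)\x$. Step three is the part that requires real care, and the key observation is exactly right: from $\mk(p)\x_i=g_K(p)\one$ and $\x_j^T\one=1$ one gets $\x_i^T\mk(p)\x_j=g_K(p)$ for all $i,j$, so the objective is constant along the entire affine line through two putative optima; since $\x_1\neq\x_2$ with $\one^T(\x_2-\x_1)=0$ forces some coordinate of $\x_2-\x_1$ to be negative, there is a finite $t^*>1$ where the line first exits the simplex, producing a feasible optimizer with a zero coordinate and contradicting the strict inequality in the definition of a $p$-core.

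One caution on the final identity. What your row computation actually yields is $g_K(p)=p\,\dw(v)+(1-p)\,\db(v)$, \emph{without} the prefactor $\x(v)$ that appears in the printed statement. That prefactor appears to be a typo: for the $p$-core $K(2,0)$ one has $g_K(p)=p/2$, $\x(v)=1/2$, $\dw(v)=\x(v)=1/2$, $\db(v)=0$, so $p\,\dw(v)+(1-p)\,\db(v)=p/2=g_K(p)$, whereas $\x(v)\bigl[p\,\dw(v)+(1-p)\,\db(v)\bigr]=p/4\neq g_K(p)$. Lemma~\ref{lem:local} and its proof are also consistent with the version without the prefactor. Your derived formula is therefore the correct one, but it does not literally match the displayed equation, so you should flag that explicitly rather than asserting that it is ``the identity asserted by the theorem.''
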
~\\

\section{Computing edit distance functions using symmetrization}
\label{sec:symmetrization}

Theorem~\ref{thm:reg}, Theorem~\ref{thm:cores}, Remark~\ref{rem:min} and the definition of $p$-cores have all of the elements in order to express $\dg(v):=1-\dw(v)-\db(v)$ for any vertex $v$ in a $p$-core CRG.  It is often useful and intuitive to focus on the gray neighborhood of vertices.
\begin{lem}\label{lem:local}
   Let $p\in (0,1)$ and $K$ be a $p$-core CRG with optimal weight function $\x$.
   \begin{enumerate}
      \item If $p\leq 1/2$, then, $\x(v)=g_K(p)/p$ for all $v\in\vw(K)$ and
      $$ \dg(v)=\frac{p-g_K(p)}{p}+\frac{1-2p}{p}\x(v) , \qquad\mbox{for all $v\in\vb(K)$.} $$ \label{it:localsmp}
      \item If $p\geq 1/2$, then $\x(v)=g_K(p)/(1-p)$ for all $v\in\vb(K)$ and
      $$ \dg(v)=\frac{1-p-g_K(p)}{1-p}+\frac{2p-1}{1-p}\x(v) , \qquad\mbox{for all $v\in\vw(K)$.} $$  \label{it:locallgp}
   \end{enumerate}
\end{lem}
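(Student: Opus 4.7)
My plan is to combine Theorem~\ref{thm:cores}, which heavily constrains the edge-coloring of a $p$-core CRG, with the matrix identity (\ref{eq:balanced}), namely $\mk(p)\x = g_K(p)\one$. The identity (\ref{eq:balanced}) is available because the defining property of a $p$-core is that the optimal $\x$ in (\ref{eq:gdef}) has no zero coordinates (so the KKT conditions hold as equalities at every vertex, and the minimizer is unique). Once these two ingredients are in place, each row of (\ref{eq:balanced}) collapses to at most two nonzero terms, and the lemma drops out of a single line of algebra.

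For part (\ref{it:localsmp}), I first apply Theorem~\ref{thm:cores} to observe that, for $p \leq 1/2$, we have $\ebk = \emptyset$ and no white edge is incident to any white vertex. Therefore, for a white vertex $v$, every off-diagonal entry in the $v$-th row of $\mk(p)$ is zero, and the equation $(\mk(p)\x)_v = g_K(p)$ reduces to $p\,\x(v) = g_K(p)$, immediately yielding $\x(v) = g_K(p)/p$. For a black vertex $v$, the only nonzero off-diagonal entries in the $v$-th row come from white edges (black edges do not exist at all), so the equation becomes
\begin{equation*}
(1-p)\x(v) + p \sum_{vz \in \ewk} \x(z) \;=\; g_K(p) .
\end{equation*}
Recognizing the sum as $\dw(v)$ (per the definition in Theorem~\ref{thm:reg}) and using $\db(v) = \x(v)$, I then substitute into $\dg(v) := 1 - \dw(v) - \db(v)$ to obtain $\dg(v) = 1 - \x(v) - [g_K(p) - (1-p)\x(v)]/p$, which rearranges to the stated expression $(p - g_K(p))/p + (1-2p)\x(v)/p$.

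Part (\ref{it:locallgp}) is the mirror image: interchanging the roles of white and black, and of $p$ and $1-p$, Theorem~\ref{thm:cores} provides the analogous structural statement for $p \geq 1/2$ ($\ewk = \emptyset$ and no black edges incident to black vertices), and the same one-row argument produces the formulas. The overlap at $p = 1/2$, where Theorem~\ref{thm:cores} forces all edges to be gray, is consistent with both parts and serves as a sanity check.

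I do not anticipate a genuine obstacle: the content is essentially ``write out one row of $\mk(p)\x = g_K(p)\one$ after Theorem~\ref{thm:cores} has eliminated everything it can.'' The only care needed is in the bookkeeping for $\dw$, $\db$, and $\dg$, which absorb the diagonal (self) weight asymmetrically in the white-vertex and black-vertex cases, and in checking that the algebraic simplification lands exactly in the form advertised.
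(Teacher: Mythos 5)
Your proposal is correct and follows essentially the same route as the paper: invoke Theorem~\ref{thm:cores} to kill all but the diagonal and white-edge terms in each row of $\mk(p)\x=g_K(p)\one$ (the paper cites this row identity in the packaged form of Theorem~\ref{thm:reg}), then solve for $\x(v)$ at white vertices and for $\dg(v)$ at black vertices, with the $p\geq 1/2$ case by symmetry. The algebraic bookkeeping you carry out matches the paper's line $g_K(p)=p(1-\x(v)-\dg(v))+(1-p)\x(v)$ exactly.
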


\begin{proof}
We will prove the case for $p\leq 1/2$.  The case where $p\geq 1/2$ is symmetric.  Let $v\in\vw(K)$.  By Theorem~\ref{thm:cores}, all vertices are incident to $v$ via a gray edge, and by Theorem~\ref{thm:reg}, $g_K(p)=p\x(v)$.  Now let $v\in\vb(K)$.  By Theorem~\ref{thm:cores}, $v$ has no black neighbors and
$$ g_K(p)=p(1-\x(v)-\dg(v))+(1-p)\x(v) . $$
Solving for $\dg(v)$ gives the result.
\end{proof}~\\

\begin{lem}\label{lem:xbound}
   Let $p\in (0,1)$ and $K$ be a $p$-core CRG with optimal weight function $\x$.
   \begin{enumerate}
      \item If $p\leq 1/2$, then $\x(v)\leq g_K(p)/(1-p)$ for all $v\in\vb(K)$. \label{it:xbound0}
      \item If $p\geq 1/2$, then $\x(v)\leq g_K(p)/p$ for all $v\in\vw(K)$. \label{it:xbound1}
   \end{enumerate}
\end{lem}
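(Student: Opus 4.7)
The plan is to extract both bounds directly from the regularity identity (\ref{eq:balanced}) after using Theorem~\ref{thm:cores} to kill the ``wrong-colored'' monochromatic edge terms. Cases (i) and (ii) are symmetric under the complement operation $p \leftrightarrow 1-p$ and black $\leftrightarrow$ white, so I would only write out case (i) in detail.

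For case (i), I would fix $p \leq 1/2$ and a black vertex $v$. The first structural input is Theorem~\ref{thm:cores}: in a $p$-core with $p \leq 1/2$ we have $\ebk = \emptyset$, so every off-diagonal entry $[\mk(p)]_{vz}$ in row $v$ is either $p$ (when $vz \in \ewk$) or $0$ (when $vz \in \egk$). The second input is that $[\mk(p)]_{vv} = 1-p$, since $v$ is black. Writing out the $v$-th coordinate of (\ref{eq:balanced}) then yields
\begin{equation*}
   g_K(p) \;=\; (1-p)\,\x(v) \;+\; p\sum_{vz\in\ewk}\x(z).
\end{equation*}
Because $\x$ is a nonnegative vector, the sum on the right is nonnegative, and dropping it gives $(1-p)\x(v) \leq g_K(p)$, which is exactly the bound in (i).

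For case (ii) the mirror-image argument uses the other conclusion of Theorem~\ref{thm:cores}, namely $\ewk = \emptyset$ when $p \geq 1/2$. Together with $[\mk(p)]_{vv} = p$ for $v \in \vw(K)$, the $v$-th row of (\ref{eq:balanced}) becomes
\begin{equation*}
   g_K(p) \;=\; p\,\x(v) \;+\; (1-p)\sum_{vz\in\ebk}\x(z) \;\geq\; p\,\x(v),
\end{equation*}
from which $\x(v) \leq g_K(p)/p$ is immediate.

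There is essentially no technical obstacle here; the whole lemma is one application of (\ref{eq:balanced}) after invoking Theorem~\ref{thm:cores}. The only bookkeeping point is to remember that on the diagonal $\mk(p)$ carries $1-p$ at a black vertex and $p$ at a white vertex, and these are precisely the coefficients that appear in the bounds to be proved. If one prefers to stay inside the local framework already set up, the same inequality falls out of Lemma~\ref{lem:local} by substituting the given expression for $\dg(v)$ into the identity $\x(v) + \dw(v) + \dg(v) = 1$ (valid because $\ebk = \emptyset$) and using $\dw(v) \geq 0$; after clearing denominators this collapses to $(1-p)\x(v) \leq g_K(p)$ as well.
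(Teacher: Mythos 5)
Your proof is correct and is essentially the paper's argument: the paper writes $\x(v)+\dg(v)\le 1$ and substitutes the formula from Lemma~\ref{lem:local}, which is exactly the observation $\dw(v)\ge 0$ applied to the $v$-th row of (\ref{eq:balanced}) — the computation you carry out directly (and your closing remark is verbatim the paper's proof). No gaps.
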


\begin{proof}
We use the fact that $\x(v)+\dg(v)\leq 1$.  Applying Lemma~\ref{lem:local} and solving for $\x(v)$ gives the result.
\end{proof}~\\

\begin{rem}
From this point forward in the paper, if $K$ is a CRG under consideration and $p$ is fixed, $\x(v)$ will denote the weight of $v\in V(K)$ under the optimal solution of the quadratic program in equation (\ref{eq:gdef}) that defines $g_K$.
\end{rem}~\\

\section{$\forb(C_h)$, $h\in\{3,\ldots,9\}$}
\label{sec:cycles}

Thomason~\cite{T} reports that Ed Marchant has found the edit distance function for $C_5$ and $C_7$.  Here we find the function for all $C_h$, $h\in\{3,\ldots,9\}$.  The proofs in this section might be substantially similar to Marchant's.

In order to compute the edit distance function for cycles, we first make the observation that $C_3$ is a complete graph and so Theorem~\ref{thm:complete} gives Corollary~\ref{cor:c3}.
\begin{cor}\label{cor:c3}
   $$ \ed_{\forb(C_3)}(p)=p/2 . $$
   Furthermore, the only $p$-core for which this is achieved for $p\in(0,1)$ is $K(2,0)$.
\end{cor}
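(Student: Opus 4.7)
The plan is to derive both parts of the corollary directly from results already established in the paper. For the formula $\ed_{\forb(C_3)}(p) = p/2$, I would simply note that $C_3 = K_3$, so $\hh = \forb(C_3)$ falls under Theorem~\ref{thm:complete} with $h = \chi = m = 3$; its ``in particular'' conclusion immediately yields $\ed_{\forb(K_3)}(p) = p/(\chi-1) = p/2$.

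For the uniqueness of the $p$-core, I would take an arbitrary reduced $p$-core $K \in \K(\forb(C_3))$ satisfying $g_K(p) = p/2$ and pin down its structure in stages. First, $K$ has no black vertex: a single $v \in \vbk$ already admits an embedding of $C_3$ by sending all three of its vertices to $v$ (legal because $\varphi(h_1) = \varphi(h_2) \in \vbk$ satisfies the edge condition of the embedding definition). Hence every vertex of $K$ is white. Second, the condition $C_3 \not\arrows K$ now translates to the statement that no three distinct vertices of $K$ are pairwise joined by edges of $\ebk \cup \egk$; otherwise an injective map onto such a triple would embed $C_3$.

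Third, I would invoke Theorem~\ref{thm:cores} to control the edge colors and force $|V(K)| \le 2$. For $p < 1/2$, Theorem~\ref{thm:cores} gives $\ebk = \emptyset$ and forbids white edges between white vertices, so (since all vertices are white) every edge of $K$ is gray, and the ``no gray triangle'' condition forces at most two vertices. The case $p = 1/2$ is the same since then all edges are gray. For $p > 1/2$, Theorem~\ref{thm:cores} gives $\ewk = \emptyset$, so every edge lies in $\ebk \cup \egk$ and again no triangle is possible, so $|V(K)| \le 2$. A single white vertex gives $g_K(p) = p > p/2$, so $|V(K)| = 2$. The remaining edge cannot be white by Theorem~\ref{thm:cores}, and if it were black, Proposition~\ref{prop:nogray} would give $g_K(p) = \min\{p, 1/2\}$, which differs from $p/2$ on $(0,1)$. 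Hence the edge is gray and $K = K(2,0)$. The only mild obstacle is juggling the three regimes $p < 1/2$, $p = 1/2$, $p > 1/2$ when applying Theorem~\ref{thm:cores}, but they all collapse to the same conclusion.
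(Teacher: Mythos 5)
Your proposal is correct. For the identity $\ed_{\forb(C_3)}(p)=p/2$ you take exactly the paper's route: observe $C_3=K_3$ and invoke the ``in particular'' clause of Theorem~\ref{thm:complete} with $\chi=m=h=3$. For the uniqueness of the $p$-core, the paper offers no explicit argument (it simply asserts that Theorem~\ref{thm:complete} gives the corollary), whereas you supply a complete and correct one: black vertices are excluded because a black vertex alone absorbs all of $C_3$; then Theorem~\ref{thm:cores} restricts the possible edge colors in each regime $p<1/2$, $p=1/2$, $p>1/2$ so that any three remaining (white) vertices would span a triangle in $\ebk\cup\egk$ and hence admit $C_3$; this caps $|V(K)|$ at $2$, and the single-vertex and black-edge options are eliminated by comparing $g$-values (via Proposition~\ref{prop:nogray} in the latter case). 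This is cleaner than trying to extract uniqueness from the Tur\'an-type $f_K$ lower bound in the proof of Theorem~\ref{thm:complete}, where tracking the equality cases would be awkward. The only cosmetic omission is the (trivial) verification that $K(2,0)$ itself is a $p$-core attaining $g_{K(2,0)}(p)=p/2$, which follows at once from Corollary~\ref{cor:components} and the fact that its only proper nontrivial sub-CRG has $g=p>p/2$.
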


For $C_h$, $h\geq 4$, we first take care of easy cases so that the only $p$-cores that need to be considered have all black vertices.  We use Lemma~\ref{lem:cycles} which establishes the upper bound and eliminates all cases except when $p\leq 1/2$ and all vertices are black.
\begin{lem}\label{lem:cycles}
   Let $h\geq 4$ and $p\in (0,1)$.
   $$ \gamma_{\forb(C_h)}(p)=\left\{\begin{array}{ll}
                                       p(1-p), & \mbox{if $h=4$;} \\
                                       \min\left\{\frac{p(1-p)}{1-p+\left(\lceil h/3\rceil-1\right)p}, \frac{1-p}{\lceil h/2\rceil-1}\right\}, & \mbox{if $h\geq 6$ is even; and} \\
                                       \min\left\{\frac{p}{2}, \frac{p(1-p)}{1-p+\left(\lceil h/3\rceil-1\right)p}, \frac{1-p}{\lceil h/2\rceil-1}\right\}, & \mbox{if $h$ is odd.}
                                    \end{array}\right. $$
   Furthermore, if there is a $p$-core CRG, $K\in\K(\forb(C_h))$ such that $g_K(p)<\gamma_{\forb(C_h)}(p)$ for any $p\in(0,1)$, then $p<1/2$ and $K$ has all black vertices.
\end{lem}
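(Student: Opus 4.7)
My plan is to prove the lemma in two stages. First, I compute $\gamma_{\forb(C_h)}(p)$ by locating the extreme points of the clique spectrum $\Gamma(\forb(C_h))$ and applying Corollary~\ref{cor:components}. Second, I rule out $p$-core violators using the weight-bound Lemma~\ref{lem:xbound}, together with Theorem~\ref{thm:cores} and Theorem~\ref{thm:components}.

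For the value of $\gamma$: because $C_h$ is triangle-free for $h\ge 4$, every clique-class used in a witnessing embedding $C_h\arrows K(r,s)$ is a singleton or an edge of $C_h$. Hence $(r,s)\in\Gamma(\forb(C_h))$ exactly when $V(C_h)$ cannot be partitioned into $r$ independent sets together with $s$ such cliques. An arc-counting argument—if the independent class has $a$ vertices, the complement splits into $a$ arcs of total length $h-a$ (or the whole cycle when $a=0$), and covering an arc of length $\ell$ by singleton-or-edge cliques needs $\lceil \ell/2\rceil$ pieces—shows the minimum clique count is $\lceil h/2\rceil$ when $a=0$ and $\lceil h/3\rceil$ when $a\ge 1$. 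Combined with $\chi(C_h)$, which is $2$ for $h$ even and $3$ for $h$ odd, the extreme points of $\Gamma(\forb(C_h))$ are $(0,\lceil h/2\rceil-1)$, $(1,\lceil h/3\rceil-1)$, and (only when $h$ is odd) $(2,0)$. Applying Corollary~\ref{cor:components} at each of these points, which gives $g_{K(r,s)}(p)=p(1-p)/(r(1-p)+sp)$, and taking the minimum yields the claimed formula.

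For the ``furthermore'' part, let $K\in\K(\forb(C_h))$ be a $p$-core with $r=|\vwk|$ and $s=|\vbk|$. Summing the bounds of Lemma~\ref{lem:xbound} over all $v\in V(K)$ yields
\[
1\;=\;\sum_{v\in V(K)}\x(v)\;\le\;\frac{r\,g_K(p)}{p}+\frac{s\,g_K(p)}{1-p},
\]
which rearranges to $g_K(p)\ge g_{K(r,s)}(p)$. Any violator $K$ must therefore have $(r,s)\notin\Gamma(\forb(C_h))$, i.e., $C_h\arrows K(r,s)$ while $C_h\not\arrows K$. When $p=1/2$, Theorem~\ref{thm:cores} forces every edge of $K$ to be gray, so $K=K(r,s)\in\K$ implies $(r,s)\in\Gamma$, and there are no violators. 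When $p>1/2$, Theorem~\ref{thm:cores} gives $\ewk=\emptyset$ and makes every black vertex of $K$ a singleton non-gray component; the sub-CRG on blacks is $K(0,s)\in\K$, forcing $s\le\lceil h/2\rceil-1$, and if $r=0$ then $g_K=(1-p)/s\ge\gamma$ is direct. If instead $r\ge 1$, I decompose $g_K^{-1}=s/(1-p)+\sum_j g_{W_j}^{-1}$ via Theorem~\ref{thm:components} over the non-gray components $W_j$ of white vertices; each $W_j\in\K(\forb(C_h))$, and the fact that $C_h$ has maximum degree two forces any ``completely joined'' pair of disjoint independent sets in $C_h$ to have one side of size one lying inside the neighborhood of the other—so a case analysis on the black-edge structure of each $W_j$ bounds $\sum g_{W_j}^{-1}$ tightly enough that $g_K\ge\gamma$. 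The case $p<1/2$ with at least one white vertex is symmetric—each white is now a singleton non-gray component, and an analogous decomposition and rigidity argument on the black components closes the case—leaving $p<1/2$ with all-black vertices as the only possible violator configuration.

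The main obstacle is the last structural step for $p\ne 1/2$ with a minority-color vertex present: the weight-sum inequality $g_K\ge g_{K(r,s)}$ can fall short of $\gamma$ when $(r,s)\notin\Gamma$, and one must use the cycle-specific rigidity of ``complete joins'' in $C_h$ to bound the component contributions $g_{W_j}^{-1}$ within the budget $(\lceil h/2\rceil-1-s)/(1-p)$ (respectively the symmetric budget when $p<1/2$). That rigidity—stemming from $C_h$'s maximum degree two—both limits how much non-gray structure $K$ can carry while remaining in $\K(\forb(C_h))$ and forces the component sums to fit the needed inequality for $g_K\ge\gamma$.
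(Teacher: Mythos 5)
Your first stage is fine: the arc-counting identification of the extreme points $(0,\lceil h/2\rceil-1)$, $(1,\lceil h/3\rceil-1)$ and, for odd $h$, $(2,0)$, followed by Corollary~\ref{cor:components}, is exactly what the paper asks the reader to verify. Likewise, summing Lemma~\ref{lem:xbound} (together with the equality $\x(v)=g_K(p)/p$ for the majority color from Lemma~\ref{lem:local}) to get $g_K(p)\ge g_{K(r,s)}(p)$ correctly disposes of $p=1/2$ and of every $p$-core whose color profile $(r,s)$ lies in $\Gamma(\forb(C_h))$. The gap is that the remaining cases --- which are precisely the content of the ``furthermore'' clause --- are only asserted. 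Note first that $(r,s)\notin\Gamma$ tells you $C_h\arrows K(r,s)$, but recoloring gray edges white is a \emph{restriction} on embeddings, so this gives no contradiction with $C_h\not\arrows K$; what is needed is an embedding of $C_h$ into $K$ itself. For $p<1/2$ with at least one white vertex the paper supplies two explicit such embeddings: $C_h\arrows K(2,1)$ forces $r\le 1$, and a partition of $V(C_h)$ into one independent set and $\lceil h/3\rceil$ pairwise nonadjacent cliques of size at most two shows that one white vertex plus $\lceil h/3\rceil$ black vertices admits $C_h$ \emph{regardless of whether the black--black edges are white or gray} (the cliques are components of $C_h-S$, so no edge of $C_h$ must cross between black vertices). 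Hence $s\le\lceil h/3\rceil-1$ and $g_K\ge g_{K(1,s)}\ge\gamma$. Your ``analogous decomposition and rigidity argument on the black components'' does not produce these embeddings, and it is not symmetric to anything you actually proved.

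The second unfilled hole is $p>1/2$ with $r\ge 1$: your bound on $\sum_j g_{W_j}^{-1}$ within the budget $(\lceil h/2\rceil-1-s)/(1-p)$ is pure assertion (``a case analysis \dots bounds \dots tightly enough''), and this is the hardest configuration in the route you chose. The paper avoids it entirely: for $h\ge 5$ one has $\ed_{\hh}(1/2)=\gamma_{\hh}(1/2)=\frac{1}{2(\lceil h/2\rceil-1)}$ by Theorem~\ref{thm:basic}(\ref{it:bcn}), $\ed_{\hh}(1)=0$, and concavity (Theorem~\ref{thm:basic}(\ref{it:concon})) then pins $\ed_{\hh}(p)=\frac{1-p}{\lceil h/2\rceil-1}=\gamma_{\hh}(p)$ on all of $[1/2,1]$, so no violators exist there. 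That chord argument fails for $h=4$ (since $\gamma_{\forb(C_4)}(1/2)=1/4$ while the relevant chord is $(1-p)/2<p(1-p)$ on $(1/2,1)$), which is why the paper treats $h=4$ by a short separate structural analysis; your proposal does not treat $h=4$ separately at all. To repair the proof, either import the concavity argument for $p\ge 1/2$ and the two explicit embeddings for $p<1/2$, or genuinely carry out the component-by-component case analysis you allude to.
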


\begin{proof}
   We leave it to the reader to verify that the extreme points of the clique spectrum of $\forb(C_h)$ are $\left(0,\lceil h/2\rceil-1\right)$, $\left(1,\lceil h/3\rceil-1\right)$ and, if $h$ is odd, $(2,0)$.  This establishes the value of $\gamma_{\forb(C_h)}(p)$.

   If $h=4$, the classes of possible CRGs are restricted.  If $K$ has at least 2 white vertices, they are connected via a gray or black edge and so $C_4$ would embed in $K$.  If $K$ has a white and at least two black vertices, then the edges between the white and black vertices are both gray and the edge between the black vertices is either gray or white and so $C_4$ would embed in $K$.  Thus, if $K$ has a white vertex, then it has at most one black vertex and this is $K(1,1)$, the CRG that defines $\gamma_{\forb(C_4)}(p)=p(1-p)$.  If $K$ has all white edges, then $g_K(p)=\min\{p+(1-2p)/k,1-p\}>p(1-p)$.  So, $\ed_{\forb(C_4)}(p)=p(1-p)$.

   Now, let $h\geq 5$.  Since $\gamma_{\hh}(1/2)=\ed_{\hh}(1/2)$ for all hereditary properties and $0=\gamma_{\forb(C_h)}(1)$, convexity gives that $\ed_{\hh}(p)=\frac{1-p}{\lceil h/2\rceil-1}$, for all $p\geq 1/2$.

   Finally, let $p\in(0,1/2)$ and $K$ be a $p$-core CRG such that $C_h\not\arrows K$.  If $K$ has only white vertices and $h$ is even, then $K\approx K(1,0)$ and $g_K(p)=p>\gamma_{\hh}(p)$.  If $K$ has only white vertices and $h$ is odd, then there are at most $2$ white vertices and $g_K(p)\geq p/2$ with equality if and only if $K\approx K(2,0)$.

   If $K$ has both white and black vertices, then it has at most $1$ white vertex because $C_h\arrows K(2,1)$.  Furthermore, it can have at most $\lceil h/3\rceil-1$ black vertices.  To see this, denote the vertices of $C_h$ by $\{0,1,\ldots,h-1\}$ where $0\sim 1\sim\cdots\sim h-1\sim 0$.  Let $S$ consist of the members of $\{0,\ldots,h-2\}$ that are divisible by $3$.  If $h-1$ is divisible by $3$, then add $h-2$ to $S$.  The graph $C_h-S$ has $\lceil h/3\rceil$ connected components, each of which are cliques of size $1$ or $2$.  Thus, regardless of whether the edges are white or gray, there are at most $\lceil h/3\rceil-1$ black vertices in $K$ and $g_K(p)\geq \frac{p(1-p)}{1-p+\left(\lceil h/3\rceil-1\right)p}$, with equality if and only if $K\approx K(1,\lceil h/3\rceil-1)$.

   Summarizing, if $p\in (0,1/2)$ and $g_K(p)=\ed_{\forb(C_h)}(p)$, then $K$ is either $K(0,\lceil h/2\rceil-1)$, $K(1,\lceil h/3\rceil-1)$, $K(2,0)$ and $h$ is odd, or $K$ has all black vertices (and white or gray edges).
\end{proof}~\\

From this point forward, we only restrict ourselves to $p\in (0,1/2)$ and CRGs, $K$, with only black vertices and white or gray edges because of Lemma~\ref{lem:cycles}.  We can immediately address $4$- and $5$-cycles. Corollary~\ref{cor:c4} and Corollary~\ref{cor:c5} have appeared before. Corollary~\ref{cor:c4} was proven in the proof of Lemma~\ref{lem:cycles}.
\begin{cor}[Marchant-Thomason~\cite{MT}]\label{cor:c4}
   $$ \ed_{\forb(C_4)}(p)=p(1-p) . $$
\end{cor}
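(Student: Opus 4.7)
The plan is to obtain matching upper and lower bounds by enumerating the $p$-core CRGs in $\K(\forb(C_4))$; the entire argument is already carried out in the $h=4$ paragraph of the proof of Lemma~\ref{lem:cycles}, so one can simply extract that case. For the upper bound I would exhibit $K=K(1,1)$, the CRG with one white vertex, one black vertex, and a single gray edge. A direct check of the embedding rules shows $C_4\not\arrows K(1,1)$: with only two target vertices any embedding of $C_4$ must identify some pair across an edge (forcing a black target) and some pair across a non-edge (forcing a white target), which cannot happen simultaneously when only one vertex of each color is available. Then Corollary~\ref{cor:components} gives $g_{K(1,1)}(p)=p(1-p)$, so $\ed_{\forb(C_4)}(p)\le p(1-p)$.

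For the lower bound, by Remark~\ref{rem:min} it is enough to show $g_K(p)\ge p(1-p)$ for every $p$-core $K\in\K(\forb(C_4))$. If $K$ has two distinct white vertices $w_1,w_2$, Theorem~\ref{thm:cores} forces the edge $w_1w_2$ into $\eb\cup\eg$ (the white option is excluded by the white-vertex restriction when $p\le 1/2$ and by $\ew=\emptyset$ when $p\ge 1/2$), and then $\varphi(1)=\varphi(3)=w_1$, $\varphi(2)=\varphi(4)=w_2$ embeds $C_4$. If $K$ has a white vertex $w$ and distinct black vertices $b_1,b_2$, Theorem~\ref{thm:cores} similarly forces $wb_1,wb_2$ to be gray and $b_1b_2$ to lie in $\ew\cup\eg$, and then $\varphi=(w,b_1,w,b_2)$ embeds $C_4$. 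So any $p$-core containing a white vertex is $K(1,0)$ or $K(1,1)$, giving $g_K(p)\in\{p,p(1-p)\}$, each at least $p(1-p)$.

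It remains to handle $p$-cores $K$ with only black vertices. When $p\le 1/2$, Theorem~\ref{thm:cores} rules out black edges, so the edges of $K$ are white or gray; a single gray edge $b_1b_2$ would yield the embedding $\varphi=(b_1,b_1,b_2,b_2)$ of $C_4$, so every edge of $K$ is white, and Proposition~\ref{prop:nogray} gives $g_K(p)=\min\{p+(1-2p)/k,\,1-p\}\ge p\ge p(1-p)$. When $p\ge 1/2$, Theorem~\ref{thm:cores} makes every edge of $K$ gray, so if $K$ has two or more vertices the same embedding $\varphi=(b_1,b_1,b_2,b_2)$ applies, forcing $K=K(0,1)$ and $g_K(p)=1-p\ge p(1-p)$. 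Combined with the previous paragraph this yields $\ed_{\forb(C_4)}(p)\ge p(1-p)$. The only mildly subtle step is tracking Theorem~\ref{thm:cores}'s different edge restrictions above and below $p=1/2$; once that bookkeeping is done, the case analysis is a routine embedding check.
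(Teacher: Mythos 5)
Your proposal is correct and follows essentially the same route as the paper, which disposes of the $h=4$ case inside the proof of Lemma~\ref{lem:cycles} by the same enumeration of $p$-cores in $\K(\forb(C_4))$; you have merely spelled out the $p\leq 1/2$ versus $p\geq 1/2$ bookkeeping, and the exclusion of a gray black--black edge, more explicitly than the paper does. One small inaccuracy worth noting: the parenthetical justification that $C_4\not\arrows K(1,1)$ is not quite right as stated, since an embedding onto two targets never simultaneously identifies an edge-pair at one target and a non-edge pair at the other (the actual cases are two edge-pairs, two non-edge pairs, or three or more cycle vertices at a single target, each of which is directly contradictory in $K(1,1)$); the conclusion is of course correct, and the fact is also implicit in the paper via $(1,1)\in\Gamma(\forb(C_4))$.
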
~\\


\begin{cor}[\cite{M}]\label{cor:c5}
   $$ \ed_{\forb(C_5)}(p)=\min\left\{\frac{p}{2},\frac{1-p}{2}\right\} . $$
\end{cor}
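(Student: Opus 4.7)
The plan is to combine the structural reduction in Lemma~\ref{lem:cycles} with a Motzkin-Straus type bound on the gray edges. Since $\overline{C_5}\cong C_5$, the property $\forb(C_5)$ is complement-invariant, so by Theorem~\ref{thm:basic}(\ref{it:comp}) I may restrict attention to $p\in(0,1/2]$, where $\min\{p/2,(1-p)/2\}=p/2$. Lemma~\ref{lem:cycles} already yields $\ed_{\forb(C_5)}(p)\le\gamma_{\forb(C_5)}(p)=p/2$ and also reduces the matching lower bound to the case of a $p$-core CRG $K\in\K(\forb(C_5))$ in which every vertex is black. By Theorem~\ref{thm:cores}, such $K$ has $\eb(K)=\emptyset$, so every edge of $K$ is either white or gray.

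The key structural step is to show that the gray graph of $K$ (the spanning subgraph on the edges of $\eg(K)$) is triangle-free. If instead there were vertices $a,b,c\in V(K)$ with $ab,ac,bc\in\eg(K)$, then the map $\varphi:V(C_5)\to\{a,b,c\}$ defined by $\varphi(1)=a$, $\varphi(2)=b$, $\varphi(3)=\varphi(4)=c$, $\varphi(5)=a$ would realize $C_5\arrows K$: the edges $34$ and $51$ of $C_5$ collapse onto single vertices of $\vb(K)$, the remaining edges of $C_5$ all land on gray edges of $K$, and every non-edge of $C_5$ likewise lands on a gray edge of $K$, meeting all embedding requirements. This contradicts $K\in\K(\forb(C_5))$, so no gray triangle exists.

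With the gray graph triangle-free, the Motzkin-Straus inequality gives $\sum_{uv\in\eg(K)}\x(u)\x(v)\le\tfrac{1}{4}$ for any probability vector $\x$, and in particular for the optimal one for $g_K(p)$. Since $\vw(K)=\emptyset$ and $\eb(K)=\emptyset$, the identity $\sum_{uv\in\ew(K)}\x(u)\x(v)=\tfrac{1}{2}(1-\|\x\|_2^2)-\sum_{uv\in\eg(K)}\x(u)\x(v)$ converts the objective in (\ref{eq:gdef}) into
$$ g_K(p)=(1-p)\|\x\|_2^2+2p\sum_{uv\in\ew(K)}\x(u)\x(v)=p+(1-2p)\|\x\|_2^2-2p\sum_{uv\in\eg(K)}\x(u)\x(v). $$
Using the Motzkin-Straus bound together with $1-2p\ge 0$, this yields $g_K(p)\ge p/2+(1-2p)\|\x\|_2^2\ge p/2$, matching the upper bound.

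The main obstacle is the structural claim that $C_5\not\arrows K$ forces the gray graph to be triangle-free. The embedding above exploits a feature specific to all-black CRGs: two adjacent vertices of $C_5$ may be sent to a common black vertex, which reduces the embedding problem to a clique-partition of $V(C_5)$ whose resulting quotient edges all need to be gray. Once this triangle-free claim is in place, the Motzkin-Straus step packages the remainder of the argument into a routine quadratic estimate, and the range $p\in[1/2,1)$ follows at once from the self-complementarity of $C_5$.
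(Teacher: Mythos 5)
Your proposal is correct. The framing steps coincide with the paper's: the upper bound and the reduction to all\-/black $p$-core CRGs with $p<1/2$ both come from Lemma~\ref{lem:cycles}, and the gray graph of such a $K$ must be triangle-free (the paper gets this from Proposition~\ref{prop:graycycles}; your explicit embedding $1\mapsto a$, $2\mapsto b$, $3,4\mapsto c$, $5\mapsto a$ is a correct hands-on verification of the same fact). Where you genuinely diverge is the final estimate. The paper argues locally via symmetrization: it takes $v_1$ of largest weight and $v_2$ of largest weight in $N_G(v_1)$, uses the absence of a common gray neighbor to get $\dg(v_1)+\dg(v_2)\le 1$, and substitutes the identities of Lemma~\ref{lem:local} to force $g_K(p)>p/2$. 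You instead bound the quadratic form globally: since all vertices are black and $\eb(K)=\emptyset$, the objective is $p+(1-2p)\|\x\|_2^2-2p\sum_{uv\in\eg(K)}\x(u)\x(v)$, and Motzkin--Straus on the triangle-free gray graph caps the last sum at $1/4$, giving $g_K(p)\ge p/2+(1-2p)\|\x\|_2^2\ge p/2$. Your route has the advantage of bypassing the $p$-core regularity machinery (Theorem~\ref{thm:reg} and Lemma~\ref{lem:local}) entirely for this step --- the bound holds for every feasible weight vector, not just the optimal one --- at the cost of importing Motzkin--Straus. The paper's local argument, by contrast, is the template that scales to the harder structural cases of Lemma~\ref{lem:345cycle} (forbidden gray $4$-cycles, $C_4^{+}$, etc.), where no single clique-number bound on the gray graph is available.
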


\begin{proof}
  Thanks to Lemma~\ref{lem:cycles}, we can restrict to $p\in (0,1/2)$ and $p$-core CRGs $K\in\K(\forb(C_5)$ for which the vertices are black.  Let $v_1$ have largest weight in $K$ and $v_2$ have largest weight in $N_G(v_1)$.  Let $g$ denote $g_K(p)$.  Since $K$ has no triangles,
  \begin{eqnarray*}
     \dg(v_1)+\dg(v_2) & \leq & 1 \\
     2\frac{p-g}{p}+\frac{1-2p}{p}(\x(v_1)+\x(v_2)) & \leq & 1 \\
     \frac{1-2p}{p}(\x(v_1)+\x(v_2)) & \leq & \frac{2g-p}{p} .
  \end{eqnarray*}
  So, $g>p/2$, a contradiction.
\end{proof}~\\

See Figure~\ref{fig:plotc4} and Figure~\ref{fig:plotc5}.

\begin{figure}[ht]\hfill%
\begin{minipage}[t]{2.25in}
\includegraphics[width=2in]{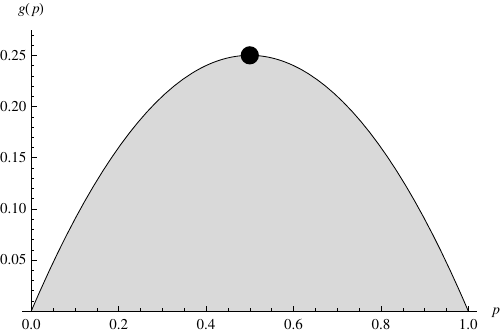}
\caption{Plot of $\ed_{\forb(C_4)}(p)=p(1-p)$.  The boundary of the shaded region is $\ed_{\forb(C_4)}(p)$.}\label{fig:plotc4}
\end{minipage}\hfill%
\begin{minipage}[t]{2.25in}
\includegraphics[width=2in]{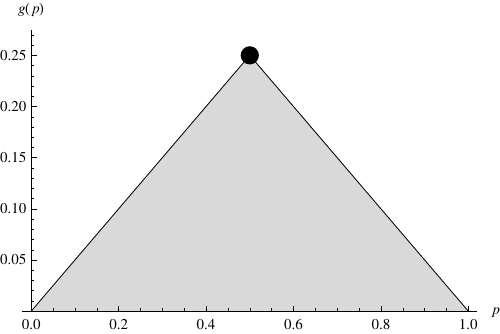}
\caption{Plot of $\ed_{\forb(C_5)}(p)=\min\{p/2,(1-p)/2\}$.}
\label{fig:plotc5}
\end{minipage}\hfill%
\end{figure}

Proposition~\ref{prop:graycycles} shows that in order to find CRGs with black vertices, white or gray edges with no $C_h$, there are many lengths of gray cycles that are forbidden in the CRG.
\begin{prop}\label{prop:graycycles}
   Let $p\in(0,1/2)$ and $K$ be a $p$-core CRG such that $K$ has black vertices and white and gray edges.  If $C_h\not\arrows K$ then $K$ has no gray cycle with length in $\left\{\lceil h/2\rceil,\ldots,h\right\}$
\end{prop}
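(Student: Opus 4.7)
I will prove the contrapositive: if $K$ contains a gray cycle of length $\ell$ with $\lceil h/2\rceil\le\ell\le h$, then $C_h\arrows K$.  The first useful observation is that by Theorem~\ref{thm:cores} the $p$-core assumption ($p<1/2$) forces $\ebk=\emptyset$, and combined with the hypothesis that every vertex of $K$ is black this means \emph{every} edge of $K$ is either white or gray.  The practical consequence is that between any two distinct vertices of $K$ the ``non-edge'' clause of the embedding definition is automatically satisfied.

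Let $v_0v_1\cdots v_{\ell-1}v_0$ be the given gray cycle.  Because $h-\ell\le\lfloor h/2\rfloor$, one can fix a matching $M$ of size $h-\ell$ in $C_h$; the quotient $C_h/M$ is again a cycle, now of length $\ell$, whose vertices in cyclic order I will call $w_0,\ldots,w_{\ell-1}$.  Each $w_i$ is either a single vertex of $C_h$ or a pair of $C_h$-adjacent vertices collapsed by an edge of $M$.  Define $\varphi\colon V(C_h)\to V(K)$ by sending every vertex that collapses to $w_i$ to $v_i$.

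The verification is then routine.  For an edge $uu'\in E(C_h)$: if $uu'\in M$ then $\varphi(u)=\varphi(u')\in\vbk$, which is legal for an edge; if $uu'\notin M$ then $\varphi(u)$ and $\varphi(u')$ are consecutive on the gray cycle, so $\varphi(u)\varphi(u')\in\egk$, also legal.  For a non-edge $uu'\notin E(C_h)$: the two endpoints cannot lie in a common collapsed class $w_i$, since a collapsed class of size two consists of the endpoints of an $M$-edge and is therefore $C_h$-adjacent; hence $\varphi(u)\ne\varphi(u')$, and by the first paragraph the edge $\varphi(u)\varphi(u')$ lies in $\ewk\cup\egk$, as required.

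The only delicate point is exactly the one I have just addressed: ensuring that non-adjacent vertices of $C_h$ are never placed on a common black vertex.  The matching-contraction formulation is chosen precisely to make this bookkeeping transparent, since every collapsed class has at most two elements and any two elements of one class are $C_h$-adjacent by construction.  The numerical range $\lceil h/2\rceil\le\ell\le h$ is exactly the range for which a matching of size $h-\ell$ exists in $C_h$, which is why it is the correct range of forbidden gray-cycle lengths.
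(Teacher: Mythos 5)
Your proof is correct and rests on the same matching-contraction correspondence that the paper uses; in fact you prove the contrapositive explicitly (a gray cycle of length $\ell\in\{\lceil h/2\rceil,\ldots,h\}$ yields an embedding of $C_h$ by contracting a matching of size $h-\ell$), which is the direction the proposition actually requires, whereas the paper's terse proof describes the reverse extraction of a gray cycle from a hypothetical embedding. All of your verifications are sound: the matching exists because $h-\ell\le\lfloor h/2\rfloor$, contracted classes are cliques of $C_h$ so non-adjacent vertices never share a black image, and since $p<1/2$ rules out black edges every non-edge of $C_h$ lands on a white or gray edge of $K$.
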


\begin{proof}
If $C_h\arrows K$, then each vertex of $K$ receives either one or two vertices that are consecutive on the cycle.  Thus, the cycle $K$ must contain is one that corresponds to the contraction of edges of $C_h$ that map to a single black vertex of $K$.  Since these edges form a matching, the cycle required to be in $K$ has length at least $\lceil h/2\rceil$ and at most $h$.
\end{proof}

In order to deal with $\forb(C_h)$ for $h\geq 6$, we use Proposition~\ref{prop:graycycles} along with two major lemmas.  Lemma~\ref{lem:345cycle} is a general structural lemma and the results on $\forb(C_h)$ that we give are immediate corollaries.  It should be noted that if we write that a CRG, say, ``has no gray $4$-cycle,'' we mean so in the subgraph sense, so it does not contain a gray $K_4$ either.
\begin{lem}\label{lem:345cycle}
   Let $p\in (0,1/2)$ and $K$ be a $p$-core with black vertices and white or gray edges.
   \begin{enumerate}
      \item If $K$ has no gray edge, then $g_K(p)>p$. \label{it:345cycle:edge}
      \item If $K$ has neither a gray $3$-cycle nor a gray $4$-cycle, then $g_K(p)>p(1-p)$. \label{it:345cycle:no34cycle}
      \item If $K$ has no gray $3$-cycle, then $g_K(p)>p/2$. \label{it:345cycle:no3cycle}
      \item If $K$ has a gray $3$-cycle, but no gray $C_4^{+}$ (that is, four vertices that induce 5 gray edges), then $g_K(p)\geq\min\{2p/3,(1-p)/3\}$. \label{it:345cycle:3to4cycle}
      \item If $K$ has no gray $4$-cycle, then $g_K(p)>p(1-p)$ for $p\in (0,1/3)$. \label{it:345cycle:no4cycle}
      \item If $K$ has a gray $C_4^{+}$ but no gray $C_5^{++}$ (that is, five vertices that induce some $5$-cycle with two chords), then $g_K(p)>\min\{2p/3,p(1-p)/(1+p)\}$. \label{it:345cycle:4to5cycle}
      \item If $K$ has a gray chordless $4$-cycle, but no gray $K_{3,3}^{-}$ (that is, a $K_{3,3}$ missing an edge), then $g_K(p)>\min\{2p/3,2p(1-p)/(2+p)\}$. Note that $K_{3,3}^{-}$ has a $6$-cycle as a subgraph.
          \label{it:345cycle:4to6cycle}
   \end{enumerate}
\end{lem}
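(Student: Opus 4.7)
The plan is to prove the seven parts through a common symmetrization framework, anchored on two consequences of $K$ being a $p$-core with only black vertices and white or gray edges: for every $v \in V(K)$,
\[ \dg(v) = \frac{p-g}{p} + \frac{1-2p}{p}\,\x(v) \quad\text{and}\quad \x(v) \leq \frac{g}{1-p}, \]
where $g := g_K(p)$, as provided by Lemmas~\ref{lem:local} and~\ref{lem:xbound}. The template is to pick a witness set $W \subseteq V(K)$ realizing the assumed gray subgraph (or a maximum-weight vertex when no gray subgraph is assumed), use the forbidden configuration to bound $\sum_{v \in W} \dg(v)$ by an expression of the form $a + b \sum_{v \in W}\x(v)$ with explicit constants, substitute the first identity, and close off with the second.

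Parts (i), (iii), and (iv) are the easier cases. Part (i) is immediate from Proposition~\ref{prop:nogray}. Part (iii) is the two-vertex argument already carried out in the proof of Corollary~\ref{cor:c5}: pick a gray edge $v_1 v_2$; absence of a gray triangle forces $\dg(v_1) + \dg(v_2) \leq 1$, and the identity yields $g > p/2$. Part (iv) is the three-vertex analogue: pick a gray triangle $v_1, v_2, v_3$, and observe that any external vertex gray-adjacent to two of the $v_i$'s would complete a gray $C_4^{+}$, so the three external gray neighborhoods are pairwise disjoint and $\sum_{i=1}^3 \dg(v_i) \leq 1 + \sum_i \x(v_i)$. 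Substitution gives $3g \geq 2p + (1-3p)\sum_i \x(v_i)$, which yields $g \geq 2p/3$ for $p \leq 1/3$ and, after feeding in $\sum_i \x(v_i) \leq 3g/(1-p)$ from Lemma~\ref{lem:xbound}, yields $g \geq (1-p)/3$ for $p \geq 1/3$.

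Parts (ii), (v), (vi), (vii) require larger or more carefully chosen witnesses. For (ii) and (v), the witness is a maximum-weight vertex $v_1$ together with its gray neighborhood $A := N_G(v_1)$: the hypothesis on gray $C_4$'s ensures that any two members of $A$ share no gray neighbor outside $v_1$, so the ``external'' gray neighborhoods of the members of $A$ are pairwise disjoint subsets of $V(K) \setminus (\{v_1\} \cup A)$. In (ii) one additionally has that $A$ is gray-independent (no $C_3$), and the proof reduces to estimating $\sum_{u \in A}\x(u)\dg(u)$ two ways---once via the identity and once via the disjointness---then comparing to force $g > p(1-p)$. Part (v) follows the same plan but must additionally dispose of the case in which $A$ contains a gray edge (creating a triangle with $v_1$): a weighted edge-counting inequality for $C_4$-free gray subgraphs, combined with $\x(v) \leq g/(1-p)$, gives $g > p(1-p)$ provided $p < 1/3$. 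For (vi) and (vii) the witness is the gray $C_4^{+}$ or chordless gray $C_4$ itself; the absence of a gray $C_5^{++}$ or gray $K_{3,3}^{-}$ is used to classify the possible attachments of an external gray vertex to the witness, again producing an inequality of the form $\sum_{v \in W}\dg(v) \leq a + b\sum_{v \in W}\x(v)$.

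The main obstacle will be the disjointness bookkeeping in parts (v)--(vii). In (iii) and (iv) each member of the witness contributes only its own weight (plus a bit from triangle incidences) to the gray-neighborhood budget, but in (v)--(vii) the witness itself already contains several internal gray edges, external vertices can attach in several non-equivalent ways, and one must enumerate these attachments and account for how each contributes to $\sum_{v \in W}\dg(v)$. A secondary subtlety in (iv), (vi), and (vii) is that the final bound is an outer minimum of two expressions: the algebra produces a coefficient $1 - kp$ that flips sign at $p = 1/k$, with one side of the threshold giving the first term of the $\min$ directly and the other side requiring the upper bound $\x(v) \leq g/(1-p)$ to extract the second.
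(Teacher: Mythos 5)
Your template is the right one and your parts (i), (iii), (iv) essentially reproduce the paper's argument, as does the high-level plan for (vi) and (vii) (choose the $4$-vertex witness, use the forbidden larger configuration to pin down external attachments, and invoke $\x(v)\leq g/(1-p)$ on the heavy vertex of the non-adjacent pair to produce the second term of the $\min$). Where I think the proposal has a genuine gap is in parts (ii) and (v), and the gap is exactly where the symmetrization has to do real work.

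In (ii) and (v) the witness set $A=N_G(v_0)$ has unbounded cardinality $\ell$, and the disjointness of the external gray neighborhoods gives an inequality of the form
\[
\x(v_0)+\dg(v_0)+\sum_{u\in A}\bigl(\dg(u)-\x(v_0)\bigr)\;\leq\;1 ,
\]
in which $\ell$ enters multiplicatively after you substitute the identity $\dg(u)=\frac{p-g}{p}+\frac{1-2p}{p}\x(u)$ (you pick up an $\ell\cdot\frac{p-g}{p}$ term). The crux of the paper's argument is that $v_0$ is a \emph{maximum}-weight vertex, which yields the combinatorial bound $\ell\geq\dg(v_0)/\x(v_0)$, together with the sign condition $\frac{p-g}{p}-\x(v_0)\geq 0$ (from Lemma~\ref{lem:xbound} under the assumption $g\leq p(1-p)$); substituting this lower bound on $\ell$ turns the inequality into a quadratic in $\x(v_0)$ whose discriminant forces $g>p(1-p)$. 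Your sketch replaces this with ``estimating $\sum_{u\in A}\x(u)\dg(u)$ two ways,'' and that weighted sum does not obviously close the loop: the disjointness bound controls $\sum_{u\in A}\dg(u)$, not a $\x(u)$-weighted version, and naively upper-bounding $\x(u)\leq\x(v_0)$ loses the $\ell$-dependence entirely (a quick check shows the resulting quadratic in $\x(v_0)$ can have nonnegative discriminant for $p$ below roughly $(3-\sqrt 5)/4$). You chose a maximum-weight witness, which is the right move, but you never say what maximality is used \emph{for}; make explicit the conversion $\ell\geq\dg(v_0)/\x(v_0)$, and in (v) handle the gray matching inside $N_G(v_0)$ by subtracting the matched partner's weight (this is where the coefficient $(1-3p)/p$ appears), rather than appealing to a separate ``edge-counting inequality for $C_4$-free graphs.''

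One minor note: in (iv) your use of $\sum_i\x(v_i)\leq 3g/(1-p)$ is fine, but the paper gets the same conclusion with the cruder bound $\sum_i\x(v_i)\leq 1$; either works.
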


\begin{proof}
For ease of notation, in calculations, we sometimes let $g$ denote $g_K(p)$.
\begin{enumerate}
   \item If $K$ has no gray edges, then for any $v\in V(K)$, $g=p+(1-2p)\x(v)>p$.~\\

   \item Let $v_0\in\vk$ have the largest weight and $N_G(v_0)=\{x_1,\ldots,x_{\ell}\}$, the gray neighborhood of $v_0$.  Let $x_i=\x(v_i)$ for $i=0,1,\ldots,\ell$.  Since there are no gray triangles, there are no gray edges in $N_G(v_0)$ and since there are no gray quadrangles, $N_G(v_i)-\{v_0\}$ and $N_G(v_j)-\{v_0\}$ are disjoint for all distinct $i,j\in\{1,\ldots,\ell\}$.  So, $\{v_0\}$, $N_G(v_0)$ and each $N_G(v_i)-\{v_0\}$, $i=1,\ldots,\ell$ form a family of $\ell+2$ pairwise disjoint sets.
   \begin{eqnarray*}
      x_0+\dg(v_0)+\sum_{i=1}^{\ell}\left[\dg(v_i)-x_0\right] & \leq & 1 \\
      x_0+\dg(v_0)+\sum_{i=1}^{\ell}\left[\frac{p-g}{p}+\frac{1-2p}{p}x_i-x_0\right] & \leq & 1 \\
      x_0+\dg(v_0)+\ell\left[\frac{p-g}{p}-x_0\right]+\frac{1-2p}{p}\dg(v_0) & \leq & 1 \\
      x_0+\frac{1-p}{p}\dg(v_0)+\ell\left[\frac{p-g}{p}-x_0\right] & \leq & 1 .
   \end{eqnarray*}

   Since $x_0$ is the largest weight, $\ell\geq\dg(v_0)/x_0$ and as long as $g\geq p(1-p)$, we have $\frac{p-g}{p}-x_0\geq \frac{p-g}{p}-\frac{g}{1-p}\geq 0$ by Lemma~\ref{lem:xbound}(\ref{it:xbound0}).  Consequently,
   \begin{eqnarray}
      x_0+\frac{1-p}{p}\dg(v_0)+\frac{\dg(v_0)}{x_0}\left[\frac{p-g}{p}-x_0\right] & \leq & 1 \nonumber \\
      x_0^2+\dg(v_0)\left[\frac{p-g}{p}+\frac{1-2p}{p}x_0\right] & \leq & x_0 \nonumber \\
      x_0^2+\left[\frac{p-g}{p}+\frac{1-2p}{p}x_0\right]^2 & \leq & x_0 \nonumber \\
      \left(\frac{p-g}{p}\right)^2 +\left[2\cdot\frac{p-g}{p}\cdot\frac{1-2p}{p}-1\right] x_0
      +\left[1+\left(\frac{1-2p}{p}\right)^2\right] x_0^2 & \leq & 0 . \label{eq:345cycle:quad1}
   \end{eqnarray}

   A quadratic expression of the form $c+bx+ax^2$ with $a>0$ has a minimum value of $c-b^2/(4a)$.
   \begin{eqnarray*}
      \left(\frac{p-g}{p}\right)^2 -\frac{\left(2\cdot\frac{p-g}{p}\cdot\frac{1-2p}{p}-1\right)^2}{4\left(1+\left(\frac{1-2p}{p}\right)^2\right)} & \leq & 0 \\
      4\left(\frac{p-g}{p}\right)^2 +4\left(\frac{p-g}{p}\right)\left(\frac{1-2p}{p}\right)-1
      & \leq & 0 .
   \end{eqnarray*}
   So,
   \begin{eqnarray*}
      \frac{p-g}{p} & \leq & \frac{1}{2}\left(-\frac{1-2p}{p}+\sqrt{\left(\frac{1-2p}{p}\right)^2+1}\right) \\
      g & \geq & \frac{1}{2}\left(1-\sqrt{1-4p+5p^2}\right) .
   \end{eqnarray*}
   This expression is greater than $p(1-p)$ for all $p\in(0,1/2)$.~\\

   \item By (\ref{it:345cycle:edge}), we may assume that $K$ has a gray edge, otherwise $g_K(p)>p$.  Let $v_1v_2$ be a gray edge and $x_i=\x(v_i)$ for $i=1,2$.  Since they have no common gray neighbor,
   \begin{eqnarray*}
      \dg(v_1)+\dg(v_2) & \leq & 1 \\
      2\left(\frac{p-g}{p}\right) +\frac{1-2p}{p}(x_1+x_2) & \leq & 1
   \end{eqnarray*}
   Since $x_1+x_2>0$, we have $g>p/2$.~\\

   \item Let $\{v_1,v_2,v_3\}$ be a gray triangle in $K$ where $x_i=\x(v_i)$ for $i=1,2,3$.  Because no pairs of $v_i$ can have a common neighbor other than the remaining $v_j$,
   \begin{eqnarray*}
      \sum_{i=1}^3\left[\dg(v_i)-\left(x_1+x_2+x_3-x_i\right)\right] +\left(x_1+x_2+x_3\right) & \leq & 1 \\
      \sum_{i=1}^3\dg(v_i)-\left(x_1+x_2+x_3\right) & \leq & 1 \\
      3\left(\frac{p-g}{p}\right)+\frac{1-3p}{p}\left(x_1+x_2+x_3\right) & \leq & 1 \\
      \frac{2p}{3}+\frac{1-3p}{3}\left(x_1+x_2+x_3\right) & \leq & g .
   \end{eqnarray*}
   If $p<1/3$, then $g>2p/3$.  If $p>1/3$, then $x_1+x_2+x_3\leq 1$ implies that $g\geq (1-p)/3$.~\\

   \item Let $v_0\in\vk$ have the largest weight.  Since there are no gray quadrangles, no member of $N_G(v_0)$ has more than one gray neighbor in $N_G(v_0)$.  Let $N_G(v_0)=\{x_1,x_1',\ldots,x_m,x_m'\}\cup \{x_{2m+1},\ldots,x_{\ell}\}$, the gray neighborhood of $v_0$ such that for $i=1,\ldots,m$, $x_ix_i'$ is a gray edge.  Let $x_i=\x(v_i)$ for $i=0,1,\ldots,\ell$.  Since there are no gray quadrangles, the gray neighborhoods outside of $\{v_0\}\cup N_G(v_0)$ of distinct vertices in $N_G(v_0)$ are distinct.  Hence,
   \begin{eqnarray*}
      x_0+\dg(v_0)+\sum_{i=1}^{m}\left[\dg(v_i)+\dg(v_i')-x_i-x_i'-2x_0\right] & & \\
      +\sum_{j=2m+1}^{\ell}\left[\dg(v_j)-x_0\right] & \leq & 1 \\
      x_0+\dg(v_0)+\ell\left[\frac{p-g}{p}-x_0\right] +\sum_{i=1}^{m}\left(\frac{1-3p}{p}\right)(x_i+x_i') \\
      +\sum_{j=2m+1}^{\ell}\left(\frac{1-2p}{p}\right)x_j & \leq & 1 \\
      \ell\left[\frac{p-g}{p}-x_0\right]+x_0+\dg(v_0) +\left(\frac{1-3p}{p}\right)\dg(v_0) & \leq & 1 .
   \end{eqnarray*}

   Again, we use the fact that $\ell\geq\dg(v_0)/x_0$ and $\frac{p-g}{p}-x_0\geq 0$.
   \begin{eqnarray*}
      \frac{\dg(v_0)}{x_0}\left[\frac{p-g}{p}-x_0\right]+x_0 +\left(\frac{1-2p}{p}\right)\dg(v_0) & \leq & 1 \\
      \left(\frac{p-g}{p}\right)^2 +\left[\frac{p-g}{p}\cdot\frac{2-5p}{p}-1\right]x_0 +\left[\frac{1-2p}{p}\cdot\frac{1-3p}{p}+1\right]x_0^2 & \leq & 0.
   \end{eqnarray*}

   Optimizing over $x_0$,
   \begin{eqnarray*}
      \left(\frac{p-g}{p}\right)^2 -\frac{\left(\left(\frac{p-g}{p}\right)\left(\frac{2-5p}{p}\right)-1\right)^2}{4\left(\left(\frac{1-2p}{p}\right)\left(\frac{1-3p}{p}\right)+1\right)} & \leq & 0 \\
      \left(\frac{p-g}{p}\right)^2\left[4\frac{1-2p}{p}\cdot\frac{1-3p}{p}+4-\left(\frac{2-5p}{p}\right)^2\right] & & \\
      +2\cdot\frac{p-g}{p}\cdot\frac{2-5p}{p} -1 & \leq & 0 \\
      3\left(\frac{p-g}{p}\right)^2 +2\left(\frac{2-5p}{p}\right)\left(\frac{p-g}{p}\right) -1 & \geq & 0 .
   \end{eqnarray*}
   So,
   \begin{eqnarray*}
      \frac{p-g}{p} & \leq & \frac{1}{3}\left(-\frac{2-5p}{p}+\sqrt{\left(\frac{2-5p}{p}\right)^2+3}\right) \\
      g & \geq & \frac{2}{3}\left((1-p)-\sqrt{1-5p+7p^2}\right) .
   \end{eqnarray*}
   Some calculations show that $g>p(1-p)$ for $p\in (0,1/3)$.~\\

   \item Let the gray $C_4^{+}$ be denoted $\{v_1,v_2,v_3,v_4\}$ such that all edges are gray except, perhaps $v_1v_3$.  Let $x_i=\x(v_i)$ for $i=1,2,3,4$.  Without loss of generality, let $x_2\geq x_4$.

    No pair $(v_i,v_j)$ can have a common gray neighbor except, perhaps $(v_2,v_4)$.  Denoting $N_G(v)$ to be the set of gray neighbors of vertex $v$, the sets $N_G(v_1)-\{v_2,v_4\}$, $N_G(v_3)-\{v_2,v_4\}$ and $N_G(v_2)-\{v_1,v_3,v_4\}$ must be disjoint.  So,
    \begin{eqnarray*}
       \left(\dg(v_1)-x_2-x_4\right) +\left(\dg(v_3)-x_2-x_4\right) & & \\
       +\left(\dg(v_2)-x_1-x_3-x_4\right) +(x_1+x_2+x_3+x_4) & \leq & 1 \\
       3\cdot\frac{p-g}{p}+\frac{1-2p}{p}(x_1+x_3)+\frac{1-3p}{p}x_2-2x_4 & \leq & 1 \\
       2+\frac{1-2p}{p}(x_1+x_3)+\frac{1-3p}{p}x_2-2x_4 & \leq & \frac{3g}{p} .
    \end{eqnarray*}

    Solving for $g$,
    \begin{eqnarray*}
       g & \geq & \frac{2p}{3}+\frac{1-2p}{3}(x_1+x_3)+\frac{1-3p}{3}x_2-\frac{2p}{3}x_4 \\
       & \geq & \frac{2p}{3}+\frac{1-2p}{3}(x_1+x_3)+\frac{1-5p}{3}x_2 .
    \end{eqnarray*}

    If $p\leq 1/5$, then $g>2p/3$.  If $p>1/5$, then we use Lemma~\ref{lem:xbound}(\ref{it:xbound0}), which gives that $x_2\leq g/(1-p)$.  So,
    \begin{eqnarray*}
         g & \geq & \frac{2p}{3}+\frac{1-2p}{3}(x_1+x_4)+\frac{1-5p}{3}\left(\frac{g}{1-p}\right) \\
         & \geq & \frac{p(1-p)}{1+p}+\frac{(1-2p)(1-p)}{2(1+p)}(x_1+x_4) .
    \end{eqnarray*}
    Consequently, $g>p(1-p)/(1+p)$.~\\

   \item Let the gray $4$-cycle be denoted $\{v_1,v_2,v_3,v_4\}$ such that all edges are gray except $v_1v_3$ and $v_2v_4$.  Let $x_i=\x(v_i)$ for $i=1,2,3,4$.  If both pairs $(v_1,v_3)$ and $(v_2,v_4)$ have common neighbors outside of $\{v_1,v_2,v_3,v_4\}$, then a $K_{3,3}$ is formed.  So, suppose $v_2$ and $v_4$ have no common neighbors other than $v_1$ and $v_3$. Without loss of generality, let $x_2\geq x_4$.

    The sets $N_G(v_1)-\{v_2,v_4\}$, $N_G(v_3)-\{v_2,v_4\}$ and $N_G(v_2)-\{v_1,v_3\}$ must be disjoint.  So,
    \begin{eqnarray*}
       \left(\dg(v_1)-x_2-x_4\right) +\left(\dg(v_3)-x_2-x_4\right) & & \\
       +\left(\dg(v_2)-x_1-x_3\right) +(x_1+x_2+x_3+x_4) & \leq & 1 \\
       3\frac{p-g}{p}+\frac{1-2p}{p}(x_1+x_3)+\frac{1-3p}{p}x_2-x_4 & \leq & 1 \\
       2+\frac{1-2p}{p}(x_1+x_3)+\frac{1-3p}{p}x_2-x_4 & \leq & \frac{3g}{p} .
    \end{eqnarray*}

    Solving for $g$,
    \begin{eqnarray*}
       g & \geq & \frac{2p}{3}+\frac{1-2p}{3}(x_1+x_3)+\frac{1-3p}{3}x_2-\frac{p}{3}x_4 \\
       & \geq & \frac{2p}{3}+\frac{1-2p}{3}(x_1+x_3)+\frac{1-4p}{3}x_2 .
    \end{eqnarray*}

    If $p\leq 1/4$, then $g>2p/3$.  If $p>1/4$, then we use Lemma~\ref{lem:xbound}(\ref{it:xbound0}), which gives that $x_2\leq g/(1-p)$.
    \begin{eqnarray*}
         g & \geq & \frac{2p}{3}+\frac{1-2p}{3}(x_1+x_4)+\frac{1-4p}{3}\left(\frac{g}{1-p}\right) \\
         & \geq & \frac{2p(1-p)}{2+p}+\frac{(1-2p)(1-p)}{2+p}(x_1+x_4) .
    \end{eqnarray*}
    Consequently, $g>2p(1-p)/(2+p)$.~\\
\end{enumerate}

This concludes the proof of Lemma~\ref{lem:345cycle}.
\end{proof}~\\

\begin{cor}\label{cor:c6}
   $$ \ed_{\forb(C_6)}(p)=\min\left\{p(1-p),\frac{1-p}{2}\right\} . $$
\end{cor}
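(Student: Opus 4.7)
The plan is to derive Corollary~\ref{cor:c6} as an immediate consequence of the preceding structural machinery. By Lemma~\ref{lem:cycles}, the upper bound $\ed_{\forb(C_6)}(p)\leq\gamma_{\forb(C_6)}(p)=\min\{p(1-p),(1-p)/2\}$ is already in hand (the extreme points of the clique spectrum of $\forb(C_6)$ being $(0,2)$ and $(1,1)$, giving CRGs $K(0,2)$ and $K(1,1)$ with $g$-values $(1-p)/2$ and $p(1-p)$ respectively). Moreover, Lemma~\ref{lem:cycles} tells us that for $p\geq 1/2$, the value is exactly $(1-p)/2$, and at $p=1/2$ we have $\ed_{\forb(C_6)}(1/2)=1/4=\gamma_{\forb(C_6)}(1/2)$ by Theorem~\ref{thm:basic}(\ref{it:bcn}). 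So the only remaining task is to establish the matching lower bound on the interval $p\in(0,1/2)$.

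For $p\in(0,1/2)$, Lemma~\ref{lem:cycles} also reduces the problem: if some $p$-core $K\in\K(\forb(C_6))$ has $g_K(p)<\gamma_{\forb(C_6)}(p)$, then $K$ must have all black vertices and only white or gray edges. I would then invoke Proposition~\ref{prop:graycycles} with $h=6$, which forbids gray cycles of any length in $\{3,4,5,6\}$ inside $K$. In particular $K$ contains neither a gray $3$-cycle nor a gray $4$-cycle.

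Now Lemma~\ref{lem:345cycle}(\ref{it:345cycle:no34cycle}) applies directly and yields $g_K(p)>p(1-p)$. On the interval $p\in(0,1/2)$ one checks that $p(1-p)\leq(1-p)/2$, so $\gamma_{\forb(C_6)}(p)=p(1-p)$. Thus $g_K(p)>\gamma_{\forb(C_6)}(p)$, contradicting the assumption that $K$ beats $\gamma_{\forb(C_6)}$. Combining this with the upper bound gives equality throughout $(0,1)$, establishing the corollary.

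Since every heavy ingredient has been prepared in advance, I do not anticipate a serious obstacle: the corollary is essentially a verification that the gray-cycle prohibitions forced by $C_6\not\arrows K$ are exactly the ones needed to trigger the ``no gray $3$- or $4$-cycle'' hypothesis of Lemma~\ref{lem:345cycle}(\ref{it:345cycle:no34cycle}). The only small subtlety is remembering to handle the endpoints $p=1/2$ and $p\in[1/2,1]$ separately via Lemma~\ref{lem:cycles} and Theorem~\ref{thm:basic}(\ref{it:bcn}) before restricting to the interval where the structural lemma does the real work.
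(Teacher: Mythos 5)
Your proof is correct and follows essentially the same route as the paper: use Lemma~\ref{lem:cycles} to get the upper bound and reduce to black-vertex $p$-cores for $p\in(0,1/2)$, invoke Proposition~\ref{prop:graycycles} with $h=6$ to rule out gray $3$- and $4$-cycles, and then apply Lemma~\ref{lem:345cycle}(\ref{it:345cycle:no34cycle}) to get the contradictory lower bound $g_K(p)>p(1-p)$. The extra verification that $p(1-p)\leq(1-p)/2$ on $(0,1/2)$ and the explicit handling of the $p\geq 1/2$ regime are sound and match the structure the paper relies on implicitly.
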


\begin{proof}
Lemma~\ref{lem:cycles} gives that the function stated above is $\gamma_{\forb(C_6)}(p)$ and so $\ed_{\forb(C_6)}(p)\leq\min\left\{p(1-p),\frac{1-p}{2}\right\}$. By Lemma~\ref{lem:cycles}, we only need to consider $p\in(0,1/2)$ and $K$ being a black-vertex $p$-core CRG in $\K(\forb(C_6))$ for which $g_K(p)<\gamma_{\forb(C_6)}(p)$. By Proposition~\ref{prop:graycycles}, $K$ has neither a $3$-cycle nor a $4$-cycle. Lemma~\ref{lem:345cycle}(\ref{it:345cycle:no34cycle}) gives that $g_K(p)\geq p(1-p)$.  So, there is no such $K$ and the corollary follows.
\end{proof}~\\

\begin{cor}\label{cor:c7}
   $$ \ed_{\forb(C_7)}(p)=\min\left\{\frac{p}{2},\frac{p(1-p)}{1+p},\frac{1-p}{3}\right\} . $$
\end{cor}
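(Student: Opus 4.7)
The plan is to prove $\ed_{\forb(C_7)}(p)=\gamma_{\forb(C_7)}(p)$. The upper bound $\ed_{\forb(C_7)}(p)\leq\gamma_{\forb(C_7)}(p)$ is automatic, and Lemma~\ref{lem:cycles} (with $\lceil 7/2\rceil-1=3$ and $\lceil 7/3\rceil-1=2$) already identifies $\gamma_{\forb(C_7)}(p)$ with the expression appearing in the corollary. So the entire argument reduces to the matching lower bound.

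For the lower bound, I would invoke the second half of Lemma~\ref{lem:cycles} to restrict attention to $p\in(0,1/2)$ and a $p$-core CRG $K\in\K(\forb(C_7))$ all of whose vertices are black and whose edges are white or gray, with $g_K(p)<\gamma_{\forb(C_7)}(p)$. Proposition~\ref{prop:graycycles} with $h=7$ then forbids any gray cycle in $K$ of length in $\{4,5,6,7\}$; only a gray triangle is possibly present. I would split on whether or not $K$ contains one.

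If $K$ has no gray triangle, then $K$ has neither a gray $3$-cycle nor a gray $4$-cycle, and Lemma~\ref{lem:345cycle}(\ref{it:345cycle:no34cycle}) yields $g_K(p)>p(1-p)\geq p(1-p)/(1+p)\geq\gamma_{\forb(C_7)}(p)$, a contradiction. If $K$ does contain a gray triangle, then since no gray $4$-cycle subgraph is present, in particular no gray $C_4^+$ is present, so Lemma~\ref{lem:345cycle}(\ref{it:345cycle:3to4cycle}) gives $g_K(p)\geq\min\{2p/3,(1-p)/3\}$. For $p\leq 1/3$ this yields $g_K(p)\geq 2p/3>p/2\geq\gamma_{\forb(C_7)}(p)$, and for $p>1/3$ it yields $g_K(p)\geq(1-p)/3\geq\gamma_{\forb(C_7)}(p)$, each a contradiction.

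The argument itself is short, so the main obstacle is not computational. Rather, it is organizational: one must verify that the list of gray substructures forbidden by Proposition~\ref{prop:graycycles} matches exactly the hypotheses of the parts of Lemma~\ref{lem:345cycle} one wishes to invoke, and that the resulting lower bounds on $g_K(p)$ dominate $\gamma_{\forb(C_7)}(p)$ on each relevant sub-interval of $(0,1/2)$. All of the genuine difficulty already lives inside the proof of Lemma~\ref{lem:345cycle}; here those parts plug in cleanly.
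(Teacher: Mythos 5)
Your argument is correct and follows essentially the same route as the paper: restrict to black-vertex $p$-cores for $p\in(0,1/2)$ via Lemma~\ref{lem:cycles}, rule out gray $4$-cycles via Proposition~\ref{prop:graycycles}, and then apply Lemma~\ref{lem:345cycle}(\ref{it:345cycle:no34cycle}) and (\ref{it:345cycle:3to4cycle}) to derive a contradiction in each case. The only difference is cosmetic — you split explicitly on presence of a gray triangle while the paper lets part (\ref{it:345cycle:no34cycle}) of Lemma~\ref{lem:345cycle} supply that dichotomy internally — and you spell out the elementary inequality checks a bit more fully.
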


\begin{proof}
The function stated above is $\gamma_{\forb(C_7)}(p)$.  Let $p\in(0,1/2)$ and suppose $K$ is a black-vertex $p$-core CRG in $\K(\forb(C_7))$ for which $g_K(p)<\gamma_{\forb(C_7)}(p)$. By Proposition~\ref{prop:graycycles}, $K$ has no gray $4$-cycle.

Since $K$ has no gray $4$-cycle, then by Lemma~\ref{lem:345cycle}(\ref{it:345cycle:no34cycle}), either $g_K(p)>p(1-p)$ or $K$ has a gray $3$-cycle.   In terms of the former, it is trivial that this is a contradiction to $g_K(p)<\gamma_{\forb(C_7)}(p)$ for $p\in (0,1/2)$, so we assume that $G$ has a gray $3$-cycle.

If $K$ has a gray $3$-cycle but no $C_4^{+}$, then by Lemma~\ref{lem:345cycle}(\ref{it:345cycle:3to4cycle}), we have $g_K(p)>\min\{2p/3, (1-p)/3\}$. Straightforward calculations verify that this is a contradiction to $g_K(p)<\gamma_{\forb(C_7)}(p)$ for $p\in (0,1/2)$.
\end{proof}~\\

\begin{cor}\label{cor:c8}
   $$ \ed_{\forb(C_8)}(p)=\min\left\{\frac{p(1-p)}{1+p},\frac{1-p}{3}\right\} . $$
\end{cor}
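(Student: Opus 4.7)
The plan is to follow the template established by Corollaries~\ref{cor:c6} and~\ref{cor:c7}. The upper bound $\ed_{\forb(C_8)}(p) \leq \gamma_{\forb(C_8)}(p) = \min\{p(1-p)/(1+p),(1-p)/3\}$ is immediate from Lemma~\ref{lem:cycles}. A direct comparison shows $p(1-p)/(1+p) \leq (1-p)/3$ exactly when $p \leq 1/2$, so $\gamma_{\forb(C_8)}(p) = p(1-p)/(1+p)$ throughout the relevant range $p \in (0,1/2)$.

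For the matching lower bound, Lemma~\ref{lem:cycles} reduces matters to black-vertex $p$-core CRGs $K \in \K(\forb(C_8))$ with $p \in (0,1/2)$, where the aim is to derive a contradiction from the assumption $g_K(p) < \gamma_{\forb(C_8)}(p)$. Proposition~\ref{prop:graycycles} applied with $h = 8$ forbids every gray cycle in $K$ of length in $\{4,5,6,7,8\}$, so in particular $K$ contains no gray $4$-cycle and hence no gray $C_4^{+}$.

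I would then split on the value of $p$. For $p \in (0,1/3)$, the absence of a gray $4$-cycle triggers Lemma~\ref{lem:345cycle}(\ref{it:345cycle:no4cycle}), yielding $g_K(p) > p(1-p) > p(1-p)/(1+p) = \gamma_{\forb(C_8)}(p)$, a contradiction. For $p \in [1/3,1/2)$, I further split on whether $K$ contains a gray $3$-cycle: if it does not, Lemma~\ref{lem:345cycle}(\ref{it:345cycle:no34cycle}) gives $g_K(p) > p(1-p) > p(1-p)/(1+p)$; if it does, then since $K$ has no gray $C_4^{+}$, Lemma~\ref{lem:345cycle}(\ref{it:345cycle:3to4cycle}) applies and produces $g_K(p) \geq \min\{2p/3,(1-p)/3\} = (1-p)/3$, which strictly exceeds $p(1-p)/(1+p)$ because $p < 1/2$. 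Either way, the assumption $g_K(p) < \gamma_{\forb(C_8)}(p)$ is contradicted.

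The only mildly delicate point is that the bound $\min\{2p/3,(1-p)/3\}$ from part~(\ref{it:345cycle:3to4cycle}) of Lemma~\ref{lem:345cycle} degenerates to $2p/3$ for small $p$, and $2p/3$ falls below $p(1-p)/(1+p)$ once $p \leq 1/5$; however, for every such $p$ one may instead invoke the stronger bound $p(1-p)$ from part~(\ref{it:345cycle:no4cycle}), which is valid on all of $(0,1/3)$. The two estimates dovetail precisely, covering $(0,1/2)$ with no gap, so there is no real obstacle beyond bookkeeping the case split.
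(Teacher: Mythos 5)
Your proof is correct and follows the same route as the paper, which disposes of this corollary with the single line ``The proof is the same as for Corollary~\ref{cor:c7}.'' In fact your version is the more careful one: a literal transcription of the $C_7$ argument would end with Lemma~\ref{lem:345cycle}(\ref{it:345cycle:3to4cycle})'s bound $g_K(p)\geq\min\{2p/3,(1-p)/3\}$, and since $\gamma_{\forb(C_8)}(p)=p(1-p)/(1+p)$ on $(0,1/2)$ has no $p/2$ term to hide behind, that bound fails to give a contradiction when $p\leq 1/5$ (where $2p/3\leq p(1-p)/(1+p)$). Your patch --- invoking Lemma~\ref{lem:345cycle}(\ref{it:345cycle:no4cycle}) to get $g_K(p)>p(1-p)$ on all of $(0,1/3)$, and reserving part~(\ref{it:345cycle:3to4cycle}) for $p\in[1/3,1/2)$ where its minimum is $(1-p)/3$ --- closes that gap cleanly, and the case analysis covers $(0,1/2)$ with no overlap issues. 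This is exactly the right reading of what ``the same proof'' must mean here, made explicit.
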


\begin{proof}
The proof is the same as for Corollary~\ref{cor:c7}.
\end{proof}~\\

\begin{cor}\label{cor:c9}
   $$ \ed_{\forb(C_9)}(p)=\min\left\{\frac{p}{2},\frac{1-p}{4}\right\} . $$
\end{cor}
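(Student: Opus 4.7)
The plan is to mirror the structure of Corollaries~\ref{cor:c6}--\ref{cor:c8}. First, Lemma~\ref{lem:cycles} gives $\gamma_{\forb(C_9)}(p)=\min\{p/2,\,p(1-p)/(1+p),\,(1-p)/4\}$; a short termwise check shows the middle term is never the unique minimum (the inequalities $p(1-p)/(1+p)\leq p/2$ and $p(1-p)/(1+p)\leq (1-p)/4$ hold only at the common breakpoint $p=1/3$, where all three terms equal $1/6$), so $\gamma_{\forb(C_9)}(p)=\min\{p/2,(1-p)/4\}$ and the upper bound on $\ed_{\forb(C_9)}(p)$ is immediate. For the lower bound, Lemma~\ref{lem:cycles} reduces to $p\in(0,1/2)$ with $K$ a black-vertex $p$-core CRG in $\K(\forb(C_9))$, and Proposition~\ref{prop:graycycles} then tells us $K$ has no gray $\ell$-cycle for $\ell\in\{5,\ldots,9\}$. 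In particular, $K$ contains no gray $C_5^{++}$ (which has a $5$-cycle) and no gray $K_{3,3}^-$ (which has a $6$-cycle), so parts (vi) and (vii) of Lemma~\ref{lem:345cycle} become available whenever their subgraph hypotheses hold.

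I would then split into four cases on the short gray-cycle structure of $K$: (a) no gray $3$-cycle, in which case part (iii) gives $g_K(p)>p/2$; (b) a gray $3$-cycle but no gray $4$-cycle (hence no $C_4^+$), in which case part (iv) gives $g_K(p)\geq\min\{2p/3,(1-p)/3\}$; (c) a gray $C_4^+$, in which case part (vi) gives $g_K(p)>\min\{2p/3,\,p(1-p)/(1+p)\}$; and (d) a gray chordless $4$-cycle but no gray $C_4^+$, in which case part (vii) gives $g_K(p)>\min\{2p/3,\,2p(1-p)/(2+p)\}$. These four cases exhaust all possibilities. In each case one verifies termwise that the lower bound is at least $\min\{p/2,(1-p)/4\}$ on $(0,1/2)$: $2p/3>p/2$ always; $(1-p)/3>(1-p)/4$ always; $p(1-p)/(1+p)\geq p/2$ iff $p\leq 1/3$ while $p(1-p)/(1+p)\geq (1-p)/4$ iff $p\geq 1/3$, so case (c) just barely works at the common breakpoint $p=1/3$; and $2p(1-p)/(2+p)\geq p/2$ for $p\leq 2/5$ while $2p(1-p)/(2+p)\geq (1-p)/4$ for $p\geq 2/7$, which together cover $(0,1/2)$ because $2/7<1/3<2/5$.

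Essentially no conceptual work remains --- all the substantive inequalities live inside Lemma~\ref{lem:345cycle}. The main obstacle is simply matching the right sub-lemma to each case and checking the comparisons at the breakpoint $p=1/3$ where several bounds coincide. The one step that warrants an explicit line of algebra is case (d): the inequality $2p(1-p)/(2+p)\geq(1-p)/4$ needs $p\geq 2/7$, and this is precisely the ingredient that would fail at small $p$ for $C_{10}$ --- consistent with the hypothesis $p\geq 1/7$ in equation~(\ref{eq:c10}) --- so it is worth writing the threshold out carefully rather than absorbing it into ``routine algebra.''
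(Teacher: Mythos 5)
Your proposal is correct and follows the paper's route exactly: reduce via Lemma~\ref{lem:cycles} and Proposition~\ref{prop:graycycles} to black-vertex $p$-cores with no gray cycle of length $5$ through $9$, then invoke the relevant parts of Lemma~\ref{lem:345cycle} and compare each resulting bound with $\min\{p/2,(1-p)/4\}$. The only (harmless) difference is that your case (d) is superfluous: part (\ref{it:345cycle:3to4cycle}) needs only a gray $3$-cycle and no gray $C_4^{+}$, not the absence of all gray $4$-cycles, so the three cases ``gray $C_4^{+}$'' (part (\ref{it:345cycle:4to5cycle})), ``gray $3$-cycle but no gray $C_4^{+}$'' (part (\ref{it:345cycle:3to4cycle})), and ``no gray $3$-cycle'' (part (\ref{it:345cycle:no3cycle}), giving $g_K(p)>p/2$ outright) already exhaust all possibilities --- which is precisely the paper's sequential elimination, with part (\ref{it:345cycle:4to6cycle}) reserved for $C_{10}$.
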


\begin{proof}
The function stated above is $\gamma_{\forb(C_9)}(p)$.  Let $p\in(0,1/2)$ and suppose $K$ is a black-vertex $p$-core CRG in $\K(\forb(C_9))$ for which $g_K(p)<\gamma_{\forb(C_9)}(p)$. By Proposition~\ref{prop:graycycles}, $K$ has no gray $C_5^{++}$.

Since $K$ has no gray $C_5^{++}$, then by Lemma~\ref{lem:345cycle}(\ref{it:345cycle:4to5cycle}), either $g_K(p)>\min\{2p/3,p(1-p)/(1+p)\}$ or $K$ has no gray $C_4^{+}$.  In terms of the former, straightforward calculations verify that this is a contradiction to $g_K(p)<\gamma_{\forb(C_9)}(p)$ for $p\in (0,1/2)$, so we assume that $G$ has no gray $C_4^{+}$.

If $K$ has no gray $C_4^{+}$, then by Lemma~\ref{lem:345cycle}(\ref{it:345cycle:3to4cycle}), either $g_K(p)>\min\{2p/3,(1-p)/3\}$ or $K$ has no gray $3$-cycle.  In terms of the former, it is trivial that this is a contradiction to $g_K(p)<\gamma_{\forb(C_9)}(p)$ for $p\in (0,1/2)$, so we assume that $G$ has no gray $3$-cycle. If that is the case, however, Lemma~\ref{lem:345cycle}(\ref{it:345cycle:no3cycle}) gives that $g_K(p)>p/2$, a contradiction.  So, there is no such $K$ for which $g_K(p)<\gamma_{\forb(C_9)}(p)$ and the corollary follows.
\end{proof}~\\

\begin{cor}\label{cor:c10}
   $$ \ed_{\forb(C_{10})}(p)=\min\left\{\frac{p(1-p)}{1+2p},\frac{1-p}{4}\right\}, \qquad\mbox{if $p\in[1/7,1]$} . $$
\end{cor}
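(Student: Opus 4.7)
The plan is to follow the template established by Corollaries~\ref{cor:c8} and~\ref{cor:c9} but with one additional opening step. First I verify via Lemma~\ref{lem:cycles} (using $\lceil 10/2\rceil - 1 = 4$ and $\lceil 10/3\rceil - 1 = 3$, $h$ even) that $\gamma_{\forb(C_{10})}(p)$ equals the stated minimum, and that $\ed_{\forb(C_{10})}(p) = (1-p)/4$ already for $p \in [1/2, 1]$ by the convexity argument in the proof of Lemma~\ref{lem:cycles}. Since a small calculation shows $\min\{p(1-p)/(1+2p),(1-p)/4\} = p(1-p)/(1+2p)$ for $p < 1/2$, matters reduce to proving that no black-vertex $p$-core CRG $K \in \K(\forb(C_{10}))$ satisfies $g_K(p) < p(1-p)/(1+2p)$ when $p \in [1/7, 1/2)$. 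By Proposition~\ref{prop:graycycles} such a $K$ has no gray $C_\ell$ for $\ell \in \{5,\ldots,10\}$; in particular no gray $K_{3,3}^{-}$ (which contains $C_6$) and no gray $C_5^{++}$ (which contains $C_5$).

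I peel off forbidden configurations in order of decreasing size. Step 1 uses Lemma~\ref{lem:345cycle}(\ref{it:345cycle:4to6cycle}): a gray chordless $4$-cycle in $K$ would force $g_K(p) > \min\{2p/3,\, 2p(1-p)/(2+p)\}$, and I will check this minimum is at least $p(1-p)/(1+2p)$ on $[1/7, 1/2)$. The inequality $2p/3 \geq p(1-p)/(1+2p)$ rearranges to $7p \geq 1$, which explains the threshold $p \geq 1/7$ in the statement; the other comparison $2p(1-p)/(2+p) \geq p(1-p)/(1+2p)$ is equivalent to $3p \geq 0$. Step 2 uses Lemma~\ref{lem:345cycle}(\ref{it:345cycle:4to5cycle}) to eliminate a gray $C_4^{+}$: here I need $\min\{2p/3,\, p(1-p)/(1+p)\} \geq p(1-p)/(1+2p)$, where the second comparison is trivial from $1+p < 1+2p$. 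After these two steps $K$ has no gray $4$-cycle in the subgraph sense.

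Step 3 then handles the absence of a gray $4$-cycle by a further split on $p$. For $p \in [1/7, 1/3)$, Lemma~\ref{lem:345cycle}(\ref{it:345cycle:no4cycle}) yields $g_K(p) > p(1-p) > p(1-p)/(1+2p)$, a contradiction. For $p \in [1/3, 1/2)$, I branch on whether $K$ contains a gray $3$-cycle: if it does, Lemma~\ref{lem:345cycle}(\ref{it:345cycle:3to4cycle}) (with its ``no $C_4^{+}$'' hypothesis automatically satisfied) gives $g_K(p) \geq \min\{2p/3,\, (1-p)/3\}$, which equals $(1-p)/3$ on this range and exceeds $p(1-p)/(1+2p)$ since $1+2p > 3p$ for $p < 1$; if it does not, Lemma~\ref{lem:345cycle}(\ref{it:345cycle:no3cycle}) gives $g_K(p) > p/2$, and $p/2 \geq p(1-p)/(1+2p)$ for $p \geq 1/4$, in particular on $[1/3, 1/2)$. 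Either branch contradicts the choice of $K$.

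The main obstacle is bookkeeping rather than substance: several bounds need to be compared against $p(1-p)/(1+2p)$ on overlapping sub-intervals of $[1/7, 1/2)$, and one must verify that Lemma~\ref{lem:345cycle}(\ref{it:345cycle:no4cycle})'s limited range $p \in (0, 1/3)$ is covered by the two alternatives in Step 3 for $p \in [1/3, 1/2)$. The threshold $p = 1/7$ is tight in this argument precisely because that is the unique crossing of $2p/3$ and $p(1-p)/(1+2p)$.
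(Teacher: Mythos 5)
Your proof is correct and uses essentially the same approach as the paper: peel away gray configurations with Lemma~\ref{lem:345cycle} until a contradiction with $g_K(p) < \gamma_{\forb(C_{10})}(p)$ forces the nonexistence of any improving black-vertex $p$-core on $[1/7,1/2)$. The only difference is a cosmetic reordering: the paper eliminates $C_4^{+}$, then the gray $3$-cycle, then uses part~(\ref{it:345cycle:no34cycle}) to conclude a chordless gray $4$-cycle must exist and finishes with part~(\ref{it:345cycle:4to6cycle}); you eliminate the chordless $4$-cycle first (using Proposition~\ref{prop:graycycles} to exclude $K_{3,3}^{-}$ up front) and replace part~(\ref{it:345cycle:no34cycle}) with a split at $p=1/3$ via parts~(\ref{it:345cycle:no4cycle}), (\ref{it:345cycle:3to4cycle}), and (\ref{it:345cycle:no3cycle}) --- logically sound and perhaps more explicit about the comparisons, just marginally longer.
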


\begin{proof}
The function stated above is $\gamma_{\forb(C_{10})}(p)$.  Let $p\in(0,1/2)$ and suppose $K$ is a black-vertex $p$-core CRG in $\K(\forb(C_{10}))$ for which $g_K(p)<\gamma_{\forb(C_9)}(p)$. By Proposition~\ref{prop:graycycles}, $K$ has no gray $C_5^{++}$.

Since $K$ has no gray $C_5^{++}$, then by Lemma~\ref{lem:345cycle}(\ref{it:345cycle:4to5cycle}), either $g_K(p)>\min\{2p/3, p(1-p)/(1+p)\}$ or $K$ has no gray $C_4^{+}$.  In terms of the former, straightforward calculations verify that this is a contradiction to $g_K(p)<\gamma_{\forb(C_{10})}(p)$ for $p\in [1/7,1/2)$, so we assume that $K$ has no gray $C_4^{+}$.

If $K$ has no gray $C_4^{+}$, then by Lemma~\ref{lem:345cycle}(\ref{it:345cycle:3to4cycle}), either $g_K(p)>\min\{2p/3, (1-p)/3\}$ or $K$ has no gray $3$-cycle.  In terms of the former, it is trivial that this is a contradiction to $g_K(p)<\gamma_{\forb(C_{10})}(p)$ for $p\in [1/7,1/2)$, so we assume that $K$ has no gray $3$-cycle.

If $K$ has no gray $3$-cycle, then by Lemma~\ref{lem:345cycle}(\ref{it:345cycle:no34cycle}), either $g_K(p)>p(1-p)$ or $K$ has a gray $4$-cycle. In terms of the former, it is trivial that this is a contradiction to $g_K(p)<\gamma_{\forb(C_{10})}(p)$ for $p\in (0,1/2)$, so we assume $K$ has a $4$-cycle, but since it cannot be $C_4^{+}$, it must be a gray chordless $4$-cycle.

If $K$ has a chordless gray $4$-cycle, then by Lemma~\ref{lem:345cycle}(\ref{it:345cycle:4to6cycle}), either $g_K(p)>\min\{2p/3, 2p(1-p)/(2+p)\}$ or $K$ has a gray $K_{3,3}^{-}$. In terms of the former, straightforward calculations verify that this is a contradiction to $g_K(p)<\gamma_{\forb(C_{10})}(p)$ for $p\in [1/7,1/2)$, so we assume that $K$ has a gray $K_{3,3}^{-}$.  However, as observed in Lemma~\ref{lem:345cycle}, this contains a gray $6$-cycle, which is a contradiction to $K\in\K(\forb(C_{10}))$.
\end{proof}

\begin{rem}
See Figures~\ref{fig:plotc6}-\ref{fig:plotc10} for plots of the edit distance functions described in Corollaries~\ref{cor:c6},~\ref{cor:c7},~\ref{cor:c8},~\ref{cor:c9} and~\ref{cor:c10}.
\end{rem}

\begin{figure}[ht]\hfill%
\begin{minipage}[t]{2.25in}
\includegraphics[width=2in]{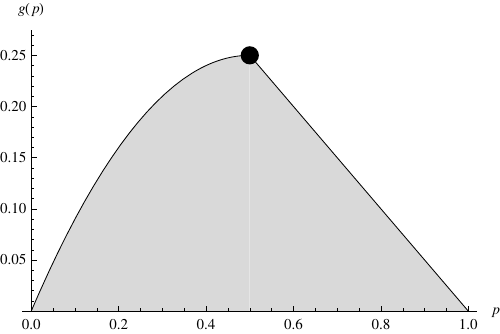}
\caption{Plot of $\ed_{\forb(C_6)}(p)=\min\{p(1-p),(1-p)/2\}$.}
\label{fig:plotc6}
\end{minipage}\hfill%
\begin{minipage}[t]{2.25in}
\includegraphics[width=2in]{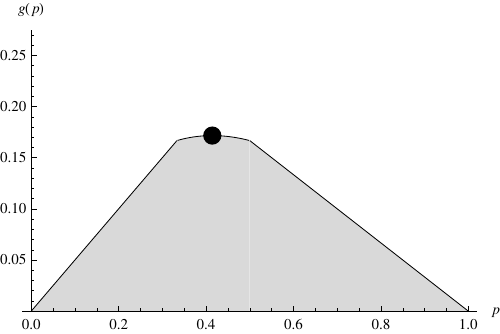}
\caption{Plot of $\ed_{\forb(C_7)}(p)=\min\{p/2,p(1-p)/(1+p),(1-p)/3\}$.}\label{fig:plotc7}
\end{minipage}\hfill%
\end{figure}

\begin{figure}[ht]\hfill%
\begin{minipage}[t]{2.25in}
\includegraphics[width=2in]{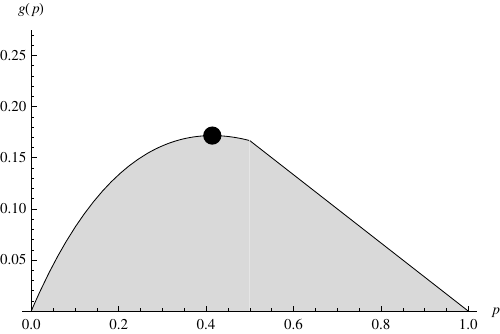}
\caption{Plot of $\ed_{\forb(C_8)}(p)=\min\{p(1-p)/(1+p),(1-p)/3\}$.}
\end{minipage}\hfill%
\begin{minipage}[t]{2.25in}
\includegraphics[width=2in]{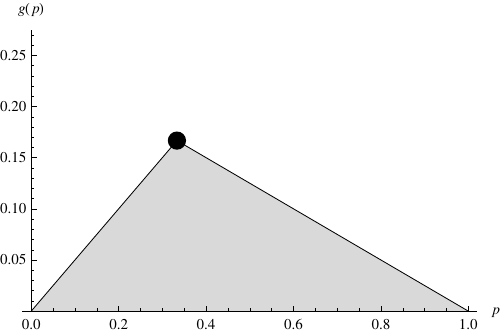}
\caption{Plot of $\ed_{\forb(C_9)}(p)=\min\{p/2,(1-p)/4\}$.} \label{fig:plotc9}
\end{minipage}\hfill%
\end{figure}

\begin{figure}[ht]
\begin{center}
\includegraphics[width=2in]{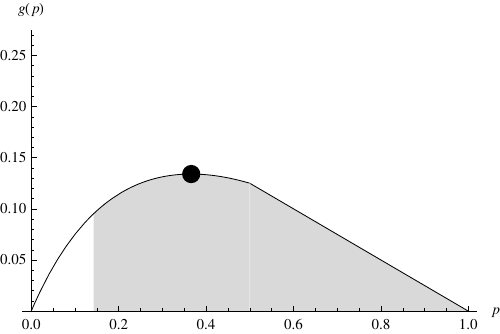}
\caption{Plot of $\ed_{\forb(C_{10})}(p)=\min\{p(1-p)/(1+2p),(1-p)/4\}$.
 An upper bound for $p<1/7$ is also on the graph.} \label{fig:plotc10}
\end{center}
\end{figure}

\section{Conclusions}
\label{sec:conc}



\subsection{$\forb\left(G(n_0,p_0)\right)$}
We provide a conjecture with some interesting implications.  Recall that $G(n,p)$ denotes the Erd\H{o}s-R\'enyi random graph on $n$ vertices with edge-probability $p$. The hereditary property $\hh=\forb(G(n_0,p_0))$ is a random variable.
\begin{conj}\label{conj:random}
   Fix $p_0\in (0,1)$ and let $\hh=\forb(G(n_0,p_0))$.  Then
   $$ \ed_{\hh}(p)=(1+o(1))\frac{2\log_2 n_0}{n_0}\min\left\{\frac{p}{-\log_2 (1-p_0)},\frac{1-p}{-\log_2 p_0}\right\} $$
   with probability approaching 1 as $n_0\rightarrow\infty$.
\end{conj}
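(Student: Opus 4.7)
My plan is to prove matching upper and lower bounds using the $p$-core machinery developed in the paper together with classical probabilistic results on the chromatic number of random graphs. Write $\chi=\chi(G(n_0,p_0))$ and $\ovchi=\chi(\overline{G(n_0,p_0)})$; by the standard concentration result of Bollob\'as, whp
$$\chi = (1+o(1))\,\frac{n_0(-\log_2(1-p_0))}{2\log_2 n_0},\qquad \ovchi = (1+o(1))\,\frac{n_0(-\log_2 p_0)}{2\log_2 n_0}.$$

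\textbf{Upper bound.} The CRGs $K(\chi-1,0)$ and $K(0,\ovchi-1)$ both lie in $\K(\hh)$ whp, since $G(n_0,p_0)$ admits no partition into fewer than $\chi$ independent sets or fewer than $\ovchi$ cliques. By Corollary~\ref{cor:components} these give $\ed_{\hh}(p)\leq\min\{p/(\chi-1),(1-p)/(\ovchi-1)\}$, which matches the conjectured formula after substitution. One should also verify that no mixed $K(r,s)$ in the clique spectrum improves this: on the approximate boundary $r/\chi+s/\ovchi\approx 1$, the quantity $p(1-p)/(r(1-p)+sp)$ is monotone in the slack variable, so the minimum is attained at an endpoint. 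Classical results on simultaneously partitioning $G(n_0,p_0)$ into independent sets and cliques (Bollob\'as--Thomason-type bounds) are what make this precise.

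\textbf{Lower bound.} By Remark~\ref{rem:min} it suffices to show that whp every $p$-core $K\in\K(\hh)$ satisfies $g_K(p)\geq$ (claimed value). By Theorem~\ref{thm:cores} a $p$-core for $p\leq 1/2$ has no black edges (and the case $p\geq 1/2$ is symmetric). Lemma~\ref{lem:xbound} then yields $\x(v)\leq g_K(p)/(1-p)$ for every black vertex, so the hypothesis $g_K(p)<$ target forces $|\vb(K)|$ to be $\Omega(n_0/\log n_0)$, and similarly for $|\vw(K)|$. One then tries to realise the embedding $G(n_0,p_0)\arrows K$ by assigning to each $v\in V(K)$ a part of $V(G(n_0,p_0))$ of size proportional to $\x(v)$: parts at white vertices of $K$ must be independent in $G(n_0,p_0)$ (size $\lesssim\alpha\sim 2\log_{1/(1-p_0)}n_0$), parts at black vertices must be cliques (size $\lesssim\omega\sim 2\log_{1/p_0}n_0$), and cross-parts joined by white edges of $K$ must induce empty bipartite subgraphs. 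A random or greedy partition argument, combined with the concentration of $\alpha,\omega,\chi,\ovchi$, should yield the embedding whp at least for all-gray $K$.

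\textbf{Main obstacle.} The central difficulty is uniformity: the space of candidate $p$-cores is infinite, and a single realisation of $G(n_0,p_0)$ must embed into \emph{all} sufficiently small $p$-cores simultaneously. The hardest subfamily is that of $p$-cores containing many white edges, since forcing empty bipartite blocks in $G(n_0,p_0)$ of the prescribed total size is a very low-probability event, and a naive union bound over CRGs fails. My strategy would be to reduce to all-gray CRGs via Theorem~\ref{thm:components} (splitting along gray-connected components) together with a symmetrisation argument, in the spirit of Theorem~\ref{thm:reg}, showing that the non-gray edges of a putative extremal $p$-core can be absorbed asymptotically into the white/black vertex contribution. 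This reduction, which dovetails with the symmetrisation theme of the paper, is the step I expect to be most delicate, and the failure of symmetrisation at small $p$ (away from the $p=1/2$ regime) is the reason the full conjecture is non-trivial.
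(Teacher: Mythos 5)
The statement you are addressing is Conjecture~\ref{conj:random}: the paper states it as an open problem and offers no proof (it records only that Alon and Stav settled the case $p_0=1/2$), so there is no argument in the paper to compare yours against. Your upper bound is fine and is exactly why the conjecture has this shape: $K(\chi-1,0)$ and $K(0,\ovchi-1)$ lie in $\K(\hh)$ by definition of $\chi$ and $\ovchi$, Theorem~\ref{thm:basic}(\ref{it:chi})--(\ref{it:ovchi}) give $\ed_{\hh}(p)\leq\min\{p/(\chi-1),(1-p)/(\ovchi-1)\}$, and Bollob\'as's concentration of the chromatic number converts this into the conjectured expression. (Your worry about mixed $K(r,s)$ belongs to the lower bound, not the upper bound: extra CRGs in $\K(\hh)$ can only decrease $\ed_{\hh}$, so they threaten the conjectured equality, not the inequality $\leq$.)

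The lower bound is where the entire difficulty lives, and your sketch does not close it; you say so yourself, but the specific reduction you propose does not work. Theorem~\ref{thm:components} decomposes $g_K$ only across gray-disconnected components; it gives no license to ``absorb'' white or black edges \emph{inside} a component, and Theorem~\ref{thm:reg} describes the optimal weighting of a $p$-core rather than providing a mechanism for deleting its non-gray edges. The problematic CRGs are precisely the $p$-cores with many white edges among black vertices (for $p<1/2$): for these, $G(n_0,p_0)\arrows K$ would require large vertex classes with \emph{no} edges between them, which whp does not happen, so such $K$ genuinely belong to $\K(\hh)$ and cannot be excluded by an embedding argument; one must instead bound $g_K(p)$ from below directly, in the spirit of Lemma~\ref{lem:345cycle} but uniformly over an infinite family determined by a random forbidden graph. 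Note also that the known case $p_0=1/2$ is proved by anchoring at $p=1/2$ via Theorem~\ref{thm:basic}(\ref{it:bcn}) (the binary chromatic number) and then using concavity together with the two linear upper bounds; for $p_0\neq 1/2$ the conjectured maximizer $p^*_{\hh}=\log(1-p_0)/\log(p_0(1-p_0))$ is not $1/2$, so that anchor gives the right value only at one interior point and cannot pin down the function near its claimed maximum. Your proposal, as written, is a research plan with the decisive step missing, not a proof.
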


The functions that define this bound are of the form $p/(\chi-1)$ and $(1-p)/(\ovchi-1)$.

Conjecture~\ref{conj:random} was proved for the case $p_0=1/2$ by Alon and Stav~\cite{AS2}. If it is true in general, then it implies that $p_{\hh}^*=\frac{\log (1-p_0)}{\log p_0(1-p_0)}$, which is only equal to $p_0$ itself when $p_0\in\{0,1/2,1\}$.  Recall that $\ed_{\hh}(p)= \lim_{n\rightarrow\infty}\dist(G(n,p),\hh)/\binom{n}{2}$ and it achieves its maximum at $p_{\hh}^*$.  Informally, the conjecture implies that it is harder to edit away copies of $G(n_0,p_0)$ from $G(n,p_{\hh}^*)$ than it is from $G(n,p_0)$.  This seems to be rather counterintuitive.

If Conjecture~\ref{conj:random} is false, then it implies that there is more information about the structure of random graphs than is revealed by just the chromatic numbers.

\subsection{Thanks}
I would like to thank Maria Axenovich and J\'ozsef Balogh for conversations which have improved the results.  I would like to thank Andrew Thomason for some useful conversations and for directing me to \cite{MT}.  I would also like to thank Tracy McKay for valuable discussions which deepened my understanding of previous results.

Thank you to Ed Marchant for finding an error in a previous version of this manuscript.

Figures are made by Mathematica and WinFIGQT.


\begin{thebibliography}{99}
\bibitem{A} V.E. Alekseev, On the entropy values of hereditary properties, \textit{Discrete Math. Appl.} \textbf{3} (1993), 191--199.
\bibitem{AS1} N. Alon and A. Stav, What is the furthest graph from a hereditary property? \textit{Random Structures Algorithms} \textbf{33} (2008), no. 1, pp. 87--104.
\bibitem{AS2} N. Alon and A. Stav, The maximum edit distance from hereditary graph properties. \textit{J. Combin. Th. Ser. B} \textbf{98} (2008), no. 4, pp. 672--697.
\bibitem{AS3} N. Alon and A. Stav, Stability type results for hereditary properties. \textit{J. Graph Theory} \textbf{62} (2009), no. 1, 65--83.
\bibitem{AS4} N. Alon and A. Stav, Hardness of edge-modification problems. \textit{Theoret. Comput. Sci.} \textbf{410} (2009), no. 47-49, 4920--4927.
\bibitem{AKM} M. Axenovich, A. K\'ezdy and R. Martin, On the editing distance of graphs, \textit{J. Graph Theory} \textbf{58} (2008), no. 2, 123--138.
\bibitem{AM} M. Axenovich and R. Martin, Avoiding patterns in matrices via a small number of changes. \textit{SIAM J. Discrete Math.} \textbf{20} (2006), no. 1, 49--54 (electronic).
\bibitem{BM} J. Balogh and R. Martin, Edit distance and its computation. \textit{Electron. J. Combin.} \textbf{15} (2008), no. 1, Research paper 20, 27pp.
\bibitem{BT1} B. Bollob\'as and A. Thomason, Hereditary and monotone properties of graphs. \textit{The mathematics of Paul Erd\H{o}s, II}, 70--78, Algorithms Combin., \textbf{14}, \textit{Springer, Berlin}, 1997.
\bibitem{BT2} B. Bollob\'as and A. Thomason, The structure of hereditary properties and colourings of random graphs. \textit{Combinatorica} \textbf{20} (2000), no. 2, 173--202.
\bibitem{B} W.G. Brown, On graphs that do not contain a Thomsen graph, \textit{Canad. Math. Bull.} \textbf{9} (1966), 281--285.
\bibitem{M} E. Marchant, (in preparation).
\bibitem{MT} E. Marchant and A. Thomason, Extremal graphs and multigraphs with two weighted colours, preprint.
\bibitem{Pik} O. Pikhurko, An exact Tur\'an result for the generalized triangle, \textit{Combinatorica} \textbf{28} (2008), no. 2, 187--208.
\bibitem{PS1} H.J. Pr\"omel and A. Steger, Excluding induced subgraphs: quadrilaterals, \textit{Random Structures Algorithms} \textbf{2} (1991), 55--71.
\bibitem{PS2} H.J. Pr\"omel and A. Steger, Excluding induced subgraphs II: extremal graphs, \textit{Discrete Appl. Math.} \textbf{44} (1993), 283--294.
\bibitem{PS3} H.J. Pr\"omel and A. Steger, Excluding induced subgraphs III: a general asymptotic, \textit{Random Structures Algorithms} \textbf{3} (1992), 19--31.
\bibitem{R} D.C. Richer, Ph.D. thesis, University of Cambridge (2000).
\bibitem{Sid} A.F. Sidorenko, Boundedness of optimal matrices in extremal multigraph and digraph problems, \textit{Combinatorica} \textbf{13} (1993), no. 1, 109--120.
\bibitem{T} A. Thomason, private communication.
\end{thebibliography}
\end{document}